\theoremstyle{plain}
\newtheorem{theorem}{Theorem}[section]
\newtheorem*{theorem*}{Theorem}
\newtheorem{lemma}{Lemma}[section]
\newtheorem{proposition}{Proposition}[section]
\theoremstyle{definition}
\newtheorem{definition}{Definition}[section]
\newtheorem*{definition*}{Definition}
\newtheorem{example}{Example}[section]
\newtheorem{non-example}{Non-example}[section]
\theoremstyle{remark}
\newtheorem{remark}{Remark}[section]
\newtheorem*{remark*}{Remark}
\newcommand{\MSD}{\textnormal{TAMSD}}
\newcommand{\bbZ}{{\Bbb Z}}
\newcommand{\bbR}{{\Bbb R}}
\newcommand{\bbN}{{\Bbb N}}
\newcommand{\bbC}{{\Bbb C}}
\newcommand{\bbP}{{\Bbb P}}
\newcommand{\bbE}{{\Bbb E}}
\newcommand{\cov}{{\mathrm{Cov}}}
\newcommand{\cor}{{\mathrm{Corr}}}
\newcommand{\Var}{{\mathrm{Var}\hspace{0.5mm}}}
\newcommand{\abs}[1]{\left|#1\right|}
\newcommand{\imag}{\textbf{i}}
\begin{document}

\title{{Asymptotic theory for the detection of mixing in anomalous diffusion}
\author{Kui Zhang and Gustavo Didier \\ Mathematics Department\\ Tulane University}
\thanks{{\em Keywords and phrases}: mixing, anomalous diffusion, asymptotic distribution, non-central limit theorems.}}

\maketitle

\begin{abstract}
In this paper, we develop asymptotic theory for the mixing detection methodology proposed by M.\ Magdziarz and A.\ Weron [\textit{Physical Review E}, 84:051138 (2011)]. The assumptions cover a broad family of Gaussian stochastic processes including fractional Gaussian noise and the fractional Ornstein-Uhlenbeck process. We show that the asymptotic distribution and convergence rates of the detection statistic may be, respectively, Gaussian or non-Gaussian and standard or nonstandard depending on the diffusion exponent. The results pave the way for mixing detection based on a single observed sample path and by means of robust hypothesis testing.
\end{abstract}


\section{Introduction}


Let $Y = \{Y_n\}_{n \in \bbZ}$ be a $\bbR$-valued, strictly stationary stochastic process, and let ${\mathcal F}_Y$ be the $\sigma$-algebra generated by $Y$. Also let $\{T^{n}\}_{n \in \bbZ}$ be the group of shift transformations induced by $Y$. We say $Y$ is \textit{mixing} if
$$
\lim_{n \rightarrow \infty} \bbP(A \cap T^n(B)) = \bbP(A)\bbP(B), \quad A,B \in {\mathcal F}_Y.
$$
In other words, the past and the future of $Y$ are asymptotically independent \cite{peligrad:1986,samorodnitsky:2016}. For $n \in \bbN$, let $E(n) = \bbE e^{\imag (Y_n - Y_0)} - |\bbE e^{\imag Y_0}|^2 \in \bbC$. If the strictly stationary process $Y$ is infinitely divisible and its L\'{e}vy measure has no atoms in $2\pi\bbZ$, then it is mixing if and only if
\begin{equation}\label{e:lim_E(n)=0}
\lim_{n \rightarrow \infty} E(n) = 0
\end{equation}
\cite{maruyama:1970,gross:1994,cambanis:podgorski:weron:1995,rosinski:zak:1996}. If, in addition, $Y$ is Gaussian with autocovariance $\gamma_Y$, condition \eqref{e:lim_E(n)=0} is equivalent to
\begin{equation}\label{e:lim_E(n)=0_Gaussian}
\lim_{n \rightarrow \infty} \gamma_Y(n) = 0.
\end{equation}


Recall that a stochastic process ${\mathcal Y}$ is said to be \textit{anomalously diffusive }when, for some $\alpha \neq 1$, its ensemble mean squared displacement (MSD) satisfies the relation
\begin{equation}\label{e:EX2(t)=Dt(alpha)}
\bbE \hspace{0.5mm}{\mathcal Y}^2(t) \propto \sigma^2 t^\alpha, \quad \sigma^2 > 0, \quad t \geq 0.
\end{equation}
Anomalously diffusive behavior has been consistently observed in several physical contexts, such as in turbulence \cite{kolmogorov:1941,friedlander:topper:1961,shiryaev:1999} and single particle tracking \cite{mason:weitz:1995,suh:dawson:hanes:2005,matsui:wagner:hill:etal:2006,lai:wang:cone:wirtz:hanes:2009,barkai:garini:metzler:2012,burnecki:muszkieta:sikora:weron:2012,metzler:jeon:cherstvy:2016}. Suppose $N \in \bbN$ observations of the (strictly stationary) increment process $Y$ of ${\mathcal Y}$ are available. For $n \in \bbN$, consider the sample analogue of $E(n)$, namely,
\begin{equation}\label{e:E(n)}
\widehat{E}_N(n) = \frac{1}{N-n+1} \sum_{k=0}^{N-n} \exp\{ \imag [Y(n+k) - Y(k)] \} - \abs{\frac{1}{N+1} \sum_{k=0}^{N} \exp\{ \imag Y(k) \}}^2, \quad n < N.
\end{equation}
If $Y$ is mixing, for large $n$ condition \eqref{e:lim_E(n)=0} suggests that
\begin{equation}\label{e:E(n)rightarrow0}
\widehat{E}_N(n) \approx 0.
\end{equation}
In light of \eqref{e:E(n)rightarrow0}, the statistic $\widehat{E}_N(n)$ is proposed in \cite{magdziarz:weron:2011} as the basis for a protocol for the detection of mixing in anomalous diffusion (for discussions, applications and extensions, see \cite{sokolov:2012,lanoiselee:grebenkov:2016,weron:burnecki:akin:sole:balcerek:tamkun:krapf:2017,slkezak:metzler:magdziarz:2019}). Nevertheless, the fundamental convergence property \eqref{e:E(n)rightarrow0} does not per se provide a means of \textit{quantifying the uncertainty} involved in the use of the statistic $\widehat{E}_N(n)$. Namely, when can we say $\widehat{E}_N(n)$ is close enough to zero -- i.e., how strong is the evidence of mixing -- starting from a single sample path?

We say that a cumulative distribution function (c.d.f.) $F$ gives the \textit{asymptotic distribution }of a sequence of random variables $\{W_N\}_{N \in \bbN}$ if the c.d.f.\ $F_N(x)$ of $W_N$ converges to $F(x)$ at every $x \in \bbR$ where $F$ is continuous. Results on convergence in distribution such as the classical central limit theorem (CLT) have a number of interesting consequences. Typically, statements are mathematically \textit{robust}, i.e., they hold for a multitude of models. Moreover, they naturally \textit{quantify} the accuracy of data analysis protocols by providing confidence intervals and hypothesis tests with explicit \textit{error margins}. In addition, such error margins depend on the sample size $N$ in a way that characterizes the convergence rate of the test statistics (e.g., $\sqrt{N}$ for the sample mean under the CLT). In particular, this allows experimentalists to compute the sample sizes associated with the desired uncertainty levels before carrying out their experiments.

This paper is a contribution to the quantification of the uncertainty involved in the use of data analysis protocols based on $\widehat{E}_N(n)$. The asymptotic distribution of $\widehat{E}_N(n)$ under various families of strictly stationary stochastic processes $Y= \{Y_n\}_{n \in \bbZ}$ is a broad mathematical problem that has remained open in the anomalous diffusion literature. The large sample behavior of $\widehat{E}_N(n)$ is expected to depend not only on the correlation structure of $Y$, but also on its marginal distributions. In this paper, we establish the asymptotic distribution of $\widehat{E}_N(n)$ for fixed $n$ as $N \rightarrow \infty$, starting from a large class of stationary-increment Gaussian anomalous diffusion processes (see Theorem \ref{t:E(n)dist}, and also Propositions \ref{prop:E_1(n)} and \ref{prop:E_2(n)}). We assume the correlation structure of the increments displays fractional memory, i.e., its first and second order increments have hyperbolic decay. Such assumptions are mild and cover cornerstone models such as fractional Gaussian noise and the fractional Ornstein-Uhlenbeck process (see Examples \ref{ex:fGn} and \ref{ex:fOU} and Lemma \ref{lem:gammaZ}). In particular, condition \eqref{e:lim_E(n)=0_Gaussian} is satisfied.

Table \ref{table:asympt_dist} schematically portrays the main result of the paper. Developing the asymptotic distribution of $\widehat{E}_N(n)$ involves dealing with the nature of dependence in anomalous diffusion, as well as with the functional form of the statistic $\widehat{E}_N(n)$. The presence of fractional memory may even give rise to unconventional convergence rates and non-Gaussian distributions. Such phenomenon is grounded in the so-named \textit{central }and \textit{non-central limit theorems} for sequences of random variables exhibiting dependence (e.g., \cite{rosenblatt:1961,taqqu:1975,taqqu:1979,dobrushin:major:1979,major:1981,giraitis:surgailis:1985,guyon:leon:1989,giraitis:surgailis:1990}). In the context of anomalous diffusion, the effect of the presence of fractional memory also appears conspicuously in the asymptotic distribution of the time-averaged MSD ($\MSD$; \cite{didier:zhang:2017,zhang:crizer:schoenfisch:hill:didier:2018}).

In the subdiffusive ($0 < \alpha < 1$), diffusive ($\alpha = 1$) and weakly superdiffusive regimes ($1 < \alpha < 3/2$), knowledge of the asymptotic distribution of $\widehat{E}_N(n)$ paves the way for the construction of \textit{asymptotically valid} (and \textit{Gaussian}) mixing detection procedures based on quantifiable error margins and assuming a \textit{single} sample path is available. In particular, it naturally leads to \textit{robust hypothesis testing} (see Section \ref{s:discussion} for a discussion; see also Section IV in \cite{magdziarz:weron:2011}). The results also shed new light on the use of $\widehat{E}_N(n)$  in the strongly superdiffusive regime $3/2 < \alpha < 2$, due to the fact that non-Gaussian limits are involved. This brings the literature on anomalous diffusion close to recent important efforts in the Probability and Statistics literature towards understanding and modeling fractional non-Gaussian phenomena (e.g., \cite{nourdin:nualart:tudor:2010,chronopoulou:viens:tudor:2011,veillette:taqqu:2013,pipiras:taqqu:2017}).

\vspace{3mm}
\setlength{\heavyrulewidth}{1.5pt}
\setlength{\abovetopsep}{4pt}
\begin{table}[!htbp]
\centering
\begin{tabular}{*6c}
\toprule
parameter range &  \multicolumn{2}{c}{rate of convergence} & \multicolumn{3}{c}{asymptotic distribution} \\
{}  & $ {\mathfrak R}\widehat{E}_N(n) $   & $ {\mathfrak I}\widehat{E}_N(n) $   &   $ {\mathfrak R}\widehat{E}_N(n) $   & $ {\mathfrak I}\widehat{E}_N(n) $ &  joint \\\hline
$0 < \alpha < 3/2$    &  standard        &   standard                         & Gaussian  & Gaussian & Gaussian\\
 $3/2 < \alpha < 2$   &  nonstandard     &  standard                          & non-Gaussian  & Gaussian & non-Gaussian\\
\bottomrule
\end{tabular}
\caption{Asymptotic behavior of $\widehat{E}_N(n)$ (see Theorem \ref{t:E(n)dist}).} \label{table:asympt_dist}
\end{table}
\vspace{3mm}

The paper is organized as follows. In Section \ref{s:background}, we recap some fundamental concepts such as fractional memory, Hermite polynomials and Hermite processes.  In Section \ref{s:main_results}, we establish the main result, namely, the study of the asymptotic distribution of $\widehat{E}_N(n)$. In Section \ref{s:conclusion_and_open_problems}, we provide conclusions and a list of open problems, including the asymptotic distribution of $\widehat{E}_N(n)$ starting from non-Gaussian measurements. The appendix contains all proofs (Section \ref{app:mixingproofs}) as well as a review of the basal framework of central and non-central limit theorems (Section \ref{s:central_non-central}).

\section{Background}\label{s:background}

Developing the asymptotic distribution of the statistic $\widehat{E}_N(n)$ involves three main mathematical elements.
\begin{itemize}
\item [$(i)$] a Gaussian stationary sequence $\{X_n\}_{n \in \bbN}$ with \textit{fractional memory}.
\item [$(ii)$] \textit{Hermite polynomials} $H_n(x)$, based on which we can decompose deterministic functions $G_r$ such that $\bbE G^2_r(X_n) < \infty$, $r = 1,\hdots,R$.
\item [$(iii)$] \textit{Hermite stochastic processes}, which appear in the asymptotic distribution of the standardized random vector processes
\begin{equation}\label{e:V_N(t)}
  {\mathbf V}_N(t) = \bigg\{\frac{1}{A_r(N)} \sum_{n=1}^{[Nt]}\big(G_r(X_n) - \bbE G_r(X_n)\big)\bigg\}_{r= 1,\hdots,R},\quad t\geq 0,
\end{equation}
for appropriate sequences $\{A_r(N)\}_{N \in \bbN}$, $r = 1,\hdots,R$, $R \in \bbN$.
\end{itemize}

For the reader's convenience, we now briefly describe each of these elements.

\subsection{Gaussian stationary sequences with fractional memory}

In this paper, we consider anomalous diffusion processes whose increments are Gaussian stationary sequences $\{X_n\}_{n\in \bbZ} $ with covariance function
\begin{equation}\label{e:gamma_X}
  \gamma_X(k) = L(k) k^{\alpha-2} \equiv L(k) k^{2d-1}, \quad \alpha \in (0,2), \quad k\in \bbN \cup \{0\},
\end{equation}
and satisfying $\bbE X_n = 0$, $\Var X_n = 1$, $n\in\bbN$. In \eqref{e:gamma_X}, $L$ is a \emph{slowly varying function} at infinity, namely, it is positive on $[c,\infty)$ with $c \geq 0$ and, for any $a > 0$,
\begin{equation}\label{e:L_2def}
  \lim_{u\rightarrow \infty} \frac{L(au)}{L(u)} = 1.
\end{equation}

Two canonical examples of such sequences are provided next.
\begin{example}\label{ex:fGn}
Let $\{B_H(t)\}_{t \in \bbR}$ be a standard fBm. A (standard) \textit{fractional Gaussian noise} (fGn) $X$ is defined as the increment process
$$
X(n) = B_H(n+1) - B_H(n), \quad n \in \bbN \cup \{0\},
$$
where $\bbE X^2(n) = 1$. The autocovariance function of $X$ is given by
$$
\gamma_X(k) = \frac{1}{2} \{ \abs{k+1}^{\alpha} - 2 \abs{k}^{\alpha} + \abs{k-1}^{\alpha} \}, \quad k\in\bbZ,
$$
where the diffusion exponent satisfies
\begin{equation}\label{e:alpha=2H}
\alpha = 2H, \quad 0<\alpha<2.
\end{equation}
Furthermore, we can rewrite the autocovariance function $\gamma_Y(k)$ in the form \eqref{e:gamma_X}, where
\begin{equation}\label{e:L_2(k)fGn}
  L(k) = \frac{k^2}{2} \bigg\{\abs{1 + \frac{1}{k}}^{\alpha} - 2 +  \abs{1 - \frac{1}{k}}^{\alpha}\bigg\}.
\end{equation}
In fact, by a Taylor expansion of order 2 for $\abs{1 \pm x}^{\alpha -2}$, it can be shown that $L(k)$ is a slowly varying function (see \eqref{e:L_2def}). Therefore, $X$ satisfies condition \eqref{e:gamma_X}.
\end{example}

\begin{example}\label{ex:fOU}
The \textit{fractional Ornstein-Uhlenbeck process} (fOU) $\{V(t)\}_{t \geq 0}$ is the almost surely continuous solution to the fBm-driven Langevin equation
\begin{equation} \label{e:fOU_SDE}
dV(t) = - \lambda V(t) dt + \sigma \hspace{0.5mm}dB_{H}(t), \quad t \geq 0, \quad \lambda,\sigma >0, \quad H \in (0,1) \backslash \Big\{\frac{1}{2}\Big\}
\end{equation}
(see \cite{prakasarao:2010,magdziarz:weron:2011b}). It can be shown that, for $t \geq 0$ and large $s$,
$$
\gamma_{V}(s) = \textnormal{Cov}(V(t),V(t+s) ) = \frac{\sigma^2}{2\lambda^2}  \alpha(\alpha-1) s^{\alpha-2} + O(s^{\alpha-4}),
$$
where $\alpha$ is as in \eqref{e:alpha=2H} (see expression \eqref{e:gamma_y(k)_fOU}). Therefore, the dependent sequence $\{V(n)\}_{n \in \bbN \cup \{0\}}$ satisfies condition \eqref{e:gamma_X}.
\end{example}

\subsection{Hermite polynomials}

In the following two definitions, we describe the notions of Hermite polynomial and Hermite rank.

\begin{definition}
The \emph{Hermite polynomial} of order 0 is $H_0(x) \equiv 1$, and the \emph{Hermite polynomial of order $n \in\bbN$} is defined by
$$
H_n(x) = (-1)^n e^{x^2/2} \frac{d^n}{dx^n} e^{-x^2/2}, \quad x\in\bbR.
$$
\end{definition}
In particular,
$$
H_1(x) = x,\quad H_2(x) = x^2 - 1.
$$
Let $\phi(dx) = (1/\sqrt{2\pi}) e^{-x^2/2} \hspace{0.5mm}dx$ be the probability measure on $\bbR$ associated with a standard normal random variable $Z$ and let $L^2(\phi)$ be the space of measurable, square-integrable functions with respect to $\phi(dx)$. Then, $G\in L^2(\bbR,\phi)$ if and only if $\bbE G^2(Z) < \infty$. Any $G\in L^2(\bbR,\phi)$ has a series expansion in Hermite polynomials
\begin{equation}\label{e:g_k}
G(x) = \sum_{m=0}^{\infty} g_m H_m(x), \quad   g_m = \frac{1}{m!} (G, H_m)_{L^2(\bbR, \phi)}, \quad m \in \bbN \cup \{0\}
\end{equation}
(see \cite{pipiras:taqqu:2017}, chapter 5).
\begin{definition}\label{def:hermiterank}
Let $G\in L^2(\bbR,\phi)$ and let $g_k$, $k\geq 0$, be the coefficients in its Hermite expansion. The \emph{Hermite rank} $r$ of $G$ is defined as the smallest index $k\geq 1$ for which $g_k \neq 0$, that is,
$$
r = \min\{ k\geq 1 : g_k \neq 0\}.
$$
\end{definition}

In view of the complex exponential appearing in \eqref{e:E(n)}, we are particularly interested in trigonometric functions $G_r$. For this reason, the following example will be useful throughout the paper.
\begin{example}
The Hermite ranks of the trigonometric functions $\sin(x)$ and $\cos(x)$ are 1 and 2, respectively. In fact, note that
$$
\int_{\bbR}^{} \sin(x) H_1(x) \frac{e^{-x^2/2}}{\sqrt{2\pi}} dx = \frac{1}{\sqrt{e}}
$$
and
\begin{equation}\label{e:mixingcosh1h2}
  \int_{\bbR}^{} \cos(x) H_1(x) \frac{e^{-x^2/2}}{\sqrt{2\pi}} dx = 0, \quad \frac{1}{2!}\int_{\bbR}^{} \cos(x) H_2(x) \frac{e^{-x^2/2}}{\sqrt{2\pi}} dx = - \frac{1}{2\sqrt{e}}.
  \end{equation}
So, for $m \in \bbN \cup \{0\}$, hereinafter the expressions
\begin{equation}\label{e:g_cossin}
  g_{\cos,m} = \frac{1}{m!} \int_{\bbR} \cos(x) H_m(x) \frac{e^{-x^2/2}}{\sqrt{2\pi}} dx, \quad
g_{\sin,m} = \frac{1}{m!} \int_{\bbR} \sin(x) H_m(x) \frac{e^{-x^2/2}}{\sqrt{2\pi}} dx,
\end{equation}
denote the $m$-th coefficients of the series expansions of $\cos(x)$ and $\sin(x)$, respectively, in Hermite polynomials.
\end{example}

\subsection{Hermite stochastic processes}

It turns out that, in general, the limit in distribution of \eqref{e:V_N(t)} is given by a so-named \textit{Hermite stochastic process} (\cite{pipiras:taqqu:2017}, chapter 4). This class of processes is only Gaussian under particular conditions on $G_r$ and on the covariance structure of $Y$. The best-known Hermite process is fractional Brownian motion (fBm), i.e., the only Gaussian, self-similar, stationary-increment process \cite{embrechts:maejima:2002}. The general definition of Hermite process is given next.
\begin{definition}\label{def:Z_H(t)}
Let $k\geq 1$ be an integer and suppose $H\in(1/2,\hspace{0.5mm}1)$. 
The \emph{Hermite process} $Z_H^{(k)} = \{Z_H^{(k)}(t)\}_{t\in \bbR}$ of order $k$ can be defined by means of the harmonizable representation
\begin{equation}\label{e:hermiteprocess}
  \{ Z_H^{(k)}(t) \}_{t\in\bbR} \overset{{\mathcal L}}{=} \bigg\{ b_{k,H_0} \int_{\bbR^k}^{''} \frac{e^{\imag t(x_1 + \hdots + x_k)} - 1}{\imag (x_1 + \hdots + x_k)} \prod_{j=1}^{k} \abs{x_j}^{\frac{1-H}{k}-\frac{1}{2}}
\widetilde{B}(dx_1) \cdots \widetilde{B}(dx_k) \bigg\}_{ t\in \bbR}.
\end{equation}
In \eqref{e:hermiteprocess},
$$
b_{k,H} = \bigg( \frac{H(2H-1)}{k! \big[2\Gamma\big(\frac{2(1-H)}{k}\big) \sin((\frac{1}{2} - \frac{1-H}{k})\pi)\big]^k} \bigg)^{1/2},
$$
and $\widetilde{B}(dx)$ is a $\bbC$-valued Gaussian random measure on $\bbR$ such that $\widetilde{B}(-dx) = \overline{\widetilde{B}(dx)}$ and $\bbE |\widetilde{B}(dx)|^2 =  dx$ (see, for instance, Section 6 in \cite{taqqu:1979}). The double prime on the integral \eqref{e:hermiteprocess} indicates that the integration domain excludes the diagonals where $x_i = \pm x_j$, $i \neq j$. The Hermite process $Z_H^{(k)}$ is called \emph{standard} if $\bbE (Z^{(k)}_H(1))^2 = 1$.
\end{definition}

The following example sheds more light on the notion of Hermite stochastic process.
\begin{example}
The Hermite process of order $k=1$ is fBm. The Hermite process of order $k=2$ is called the \textit{Rosenblatt process} \cite{taqqu:1975}. All standard Hermite processes have covariance function
\begin{equation}\label{e:mixing_fbmcov}
  \bbE Z_H^{(k)}(s)Z_H^{(k)}(t) = \frac{1}{2} \{\abs{s}^{2H} + \abs{t}^{2H} - \abs{s-t}^{2H}\}, \quad s,t \in \bbR,
\end{equation}
namely, the classical covariance function of fBm.

Furthermore, it is convenient to reexpress the Hermite process \eqref{e:hermiteprocess} as follows. For any appropriate integrand $f$, let
\begin{equation}\label{e:widehatI_k(f)}
 \widehat{I}_{k}(f) = \int_{\bbR^k}^{''} f(x_1,\hdots,x_k) \widetilde{B}(dx_1) \cdots \widetilde{B}(dx_k).
\end{equation}
Then,
$$
\{ Z_H^{(k)}(t) \}_{t\in\bbR} \overset{\textnormal{f.d.d.}}{=}  \{ \widehat{I}_{k}(f_{H,k,t})  \}_{ t\in \bbR},
$$
where $\overset{\textnormal{f.d.d.}}{=}$ denotes the equality of finite dimensional distributions and
\begin{equation}\label{e:f_Hkt}
  f_{H,k,t} (x_1,\hdots,x_k) := b_{k,H} \hspace{0.5mm} \frac{e^{\imag t(x_1+\hdots+x_k)} - 1}{\imag (x_1 + \hdots + x_k)}\prod_{j=1}^{k} \abs{x_j}^{\frac{1-H}{k} - \frac{1}{2}}.
\end{equation}
\end{example}

\section{Main results}\label{s:main_results}

\subsection{The asymptotic distribution of $\widehat{E}_N(n)$}

Let $Y$ be a centered Gaussian stationary process with autocovariance function $\gamma_Y$ and satisfying $\bbE Y^2(0) = 1$. Suppose the observations $\{Y(k)\}_{k=0,\hdots,N}$ are available, and consider the statistic $\widehat{E}_N(n)$ defined by \eqref{e:E(n)}. For $z \in \bbC$, let $\mathfrak{R}[z]$ and $\mathfrak{I}[z]$ be its real and imaginary parts, respectively. Note that $\widehat{E}_N(n)$ can be decomposed as $\widehat{E}_{N,1}(n) - \widehat{E}_{N,2}$, where
$$
\bbC \ni  \widehat{E}_{N,1}(n) = \frac{1}{N-n+1} \sum_{k=0}^{N-n} \cos\big(Y(n+k) - Y(k)\big)
$$
\begin{equation}\label{e:E1}
+ \hspace{0.5mm}\imag \hspace{0.5mm}\frac{1}{N-n+1} \sum_{k=0}^{N-n} \sin\big(Y(n+k) - Y(k)\big),
\end{equation}
\begin{equation}\label{e:E2}
 \bbR \ni \widehat{E}_{N,2} = \abs{\frac{1}{N+1} \sum_{k=0}^{N} \cos Y(k)}^2 + \abs{\frac{1}{N+1} \sum_{k=0}^{N} \sin Y(k)}^2.
\end{equation}
In particular,
$$
\mathfrak{R} \widehat{E}_N(n) = \mathfrak{R} \widehat{E}_{N,1}(n) - \widehat{E}_{N,2},\quad
\mathfrak{I} \widehat{E}_N(n) = \mathfrak{I} \widehat{E}_{N,1}(n).
$$
Theorem \ref{t:E(n)dist}, stated next, is the main result of this paper. There, we establish the asymptotic distribution of $\widehat{E}_N(n)$, for $n\in\bbN$ as $N \rightarrow \infty$, starting from stationary-increment Gaussian anomalous diffusion processes. In particular, the autocovariance of the increment process $Y$ satisfies condition \eqref{e:lim_E(n)=0_Gaussian}, i.e., $Y$ is mixing. The theorem contains two main statements, corresponding to the asymptotic distributions of $\mathfrak{R}\widehat{E}_N(n)$ and $\mathfrak{I} \widehat{E}_N(n)$. In the first one, irrespective of the value of $\alpha$, the asymptotic distribution of $\mathfrak{I} \widehat{E}_N(n)$ is claimed to be Gaussian and the convergence rate is standard ($\sqrt{N+1}$). In the second one, the asymptotic distribution of $\mathfrak{R}\widehat{E}_N(n)$ is claimed to depend on the value of $\alpha$. When $0<\alpha <3/2$, the convergence rate is standard ($\sqrt{N+1}$) and the limiting distribution is Gaussian. By contrast, when $3/2<\alpha <2$, the convergence rate is nonstandard (nearly $(N+1)^{2-\alpha}$) and the limiting distribution is non-Gaussian.
\begin{theorem}\label{t:E(n)dist}
Let $\{Y(n)\}_{n \in \bbZ}$ be a centered Gaussian stationary process with $\bbE Y^2(0)=1$. Let $\gamma_Y$ be its autocovariance function, which is assumed to satisfy the decay condition
\begin{equation}\label{e:gammaY(k)=O(k^(alpha - 2))}
\gamma_Y(k) = O(k^{\alpha - 2}), \quad k \rightarrow \infty, \quad  0<\alpha<2.
\end{equation}
Let $\{\widetilde{Z}_n(k)\}_{k\in \bbZ}$ be an associated normalized increment process given by
\begin{equation}\label{e:defZn}
  \widetilde{Z}_n(k) = \sigma(n)[Y(n+k) - Y(k)], \quad k \in \bbN,
\end{equation}
where $\sigma(n)$ is defined so that $\Var \widetilde{Z}_n(k) = 1$. Let $\gamma_{\widetilde{Z}_n}$ be its autocovariance function, which is assumed to satisfy the decay condition
\begin{equation}\label{e:gammaZtilde(k)=O(k^(alpha-3))}
\gamma_{\widetilde{Z}_n}(k) = O(k^{\alpha -3}), \quad k \rightarrow \infty, \quad  0<\alpha<2.
\end{equation}
\begin{itemize}
  \item [($i.1$)] When $0<\alpha<3/2$, if
  \begin{equation}\label{e:cross-cov_summability}
  \sum^{\infty}_{\ell=-\infty}\Big|\sum_{m = 2}^{\infty} g_{1,n,m} g_{\cos,m} m!  (\gamma_{\widetilde{Z}_n,Y}(\ell))^m \Big| < \infty,
  \end{equation}
  then
  \begin{equation}\label{e:weak_limit_B1(1)_B3(1)}
  \sqrt{N+1} \bigg( \mathfrak{R} \widehat{E}_N(n) - \bbE \cos\bigg(\frac{\widetilde{Z}_n(0)}{\sigma(n)}\bigg) + \abs{\bbE \cos Y(0)}^2 \bigg)
\overset{d}{\rightarrow} \sigma_{1,n} B_1(1) + 2 \hspace{0.5mm}\big(\bbE \cos Y(0)\big) \hspace{1mm}\sigma_3 B_3(1),
\end{equation}
as $N\rightarrow \infty$, where $\sigma_{1,n}$ is defined in \eqref{e:sigma_12}, $\sigma_3$ is defined in \eqref{e:sigma3}, $B_1(1)$ and $B_3(1)$ are two standard normal distributions with correlation
\begin{equation}\label{e:corrB1B3}
  \cor (B_1(1), B_3(1)) = - \frac{\sum^{\infty}_{\ell=-\infty}\sum_{m = 2}^{\infty} g_{1,n,m} g_{\cos,m} m!  (\gamma_{\widetilde{Z}_n,Y}(\ell))^m }{\sigma_{1,n} \sigma_3}.
\end{equation}
In \eqref{e:corrB1B3}, $g_{1,n,m}$ and $g_{\cos,m}$ are the coefficients in the Hermite expansion \eqref{e:g_k} of $\cos(x/\sigma(n))$ and $\cos(x)$, respectively, and $\gamma_{\widetilde{Z}_n,Y}$ is the cross-covariance function of $\widetilde{Z}_n(k+j)$ and $Y(j)$;
  \item [($i.2$)] when $3/2<\alpha<2$,
  $$
\frac{(N+1)^{2-\alpha}}{L(N+1)} \bigg( \mathfrak{R} \widehat{E}_N(n) - \bbE \cos\bigg(\frac{\widetilde{Z}_n(0)}{\sigma(n)}\bigg) + \abs{\bbE \cos(Y(0))}^2 \bigg)
$$
\begin{equation}\label{e:weak_limit_Re_E-hat_N(n)_3/2<alpha<2}
\overset{d}{\rightarrow} g_{\cos,2}\beta_{2,\alpha - 1} \widehat{I}_{2}(f_{\alpha - 1,2,1})
    + g_{\sin,1}^2 \beta_{1,\alpha/2}^2 \widehat{I}_{1}^2(f_{\alpha/2,1,1}),
\end{equation}
as $N\rightarrow \infty$, where $g_{\cos,2}$ and $g_{\sin,1}$ are given in \eqref{e:g_cossin}, $\beta_{2,\alpha - 1}$ and $\beta_{1,\alpha/2}$ are defined in \eqref{e:beta_kH}, $f_{H,k,t}$ is the kernel function defined in \eqref{e:f_Hkt}, $\widehat{I}_{k}(f)$ is defined in \eqref{e:widehatI_k(f)}, $L(N+1)$ is defined in \eqref{e:L_2(k)fGn};
  \item [($ii$)] for $0<\alpha<2$,
  \begin{equation}\label{e:weak_limit_B2(1)}
  \sqrt{N+1} \, \mathfrak{I} \widehat{E}_N(n) \overset{d}{\rightarrow} \sigma_{2,n} B_2(1),
  \end{equation}
as $N\rightarrow \infty$, where $\sigma_{2,n}$ is defined in \eqref{e:sigma_12}, and $B_2(1)$ is a standard normal distribution.
\end{itemize}
In the parametric range $0 < \alpha < 3/2$, the random variables $B_2(1)$ in \eqref{e:weak_limit_B2(1)} and $B_1(1)$ and $B_3(1)$ in \eqref{e:weak_limit_B1(1)_B3(1)} satisfy
\begin{equation}\label{e:Cov(B2,Bell(1))=0}
\cor \big(B_2(1),B_\ell(1)\big) = 0, \quad \ell = 1,3.
\end{equation}
\end{theorem}

\begin{example}
The assumptions of Theorem \ref{t:E(n)dist} are mild and cover broad classes of anomalous diffusion models. For example, expressions \eqref{e:sum_gamma_Z(n)}, \eqref{e:gamma_y(k)} and \eqref{e:gamma_y(k)_fOU} in Lemma \ref{lem:gammaZ} and expression \eqref{e:summability_cross-cov} in Lemma \ref{l:summability_cross-cov} show that Theorem \ref{t:E(n)dist} holds when $Y$ follows either a fGn or a fOU process.
\end{example}

\begin{example}
Suppose $\{Y(n)\}_{n \in \bbN \cup \{0\}}$ is a fGn with Hurst parameter $H \in (0,1)$. Then, the term $\sigma(n)$ as in expression \eqref{e:defZn} is given by
$$
\sigma(n) = \frac{1}{\sqrt{2-\abs{n-1}^{\alpha} + 2\abs{n}^{\alpha} - \abs{n+1}^{\alpha}}}, \quad n \in \bbN.
$$
\end{example}

The results encapsulated in Theorem \ref{t:E(n)dist} rely on first establishing two intermediary results, namely, Propositions \ref{prop:E_1(n)} and \ref{prop:E_2(n)}. In Proposition \ref{prop:E_1(n)}, we establish the limit law of the random vector $\big(\mathfrak{R}\widehat{E}_{N,1}(n), \mathfrak{I}\widehat{E}_{N,1}(n)\big)$, which turns out to be a bivariate Gaussian distribution. Since $\widehat{E}_{N,2}$ is real-valued, the asymptotic distribution of $\mathfrak{I}\widehat{E}_N(n)$ -- as described in Theorem \ref{t:E(n)dist} -- is the same as that of $\mathfrak{I}\widehat{E}_{N,1}(n)$. On the other hand (and keeping in mind that $\mathfrak{R} \widehat{E}_N(n) = \mathfrak{R} \widehat{E}_{N,1}(n) - \widehat{E}_{N,2}$), the stochastic processes involved in the expressions for $\mathfrak{R} \widehat{E}_{N,1}(n) $ and $\widehat{E}_{N,2}$ -- namely, $Y$ and $\widetilde{Z}_n$, respectively -- are distinct. Therefore, we can\textit{not} directly apply multivariate theorems (i.e., Theorems \ref{thm:central_multi} or \ref{thm:noncentral_multi}) to obtain the joint asymptotic distribution of $\mathfrak{R}\widehat{E}_{N,1}(n)$ and $\widehat{E}_{N,2}$. However, it is shown in Proposition \ref{prop:E_2(n)} that the asymptotic distribution of $\widehat{E}_{N,2}$ is Gaussian or non-Gaussian, depending on whether $0 < \alpha  < 3/2$ or $3/2 < \alpha < 2$, respectively. So, when $0 < \alpha <3/2$, $\mathfrak{R}\widehat{E}_{N,1}(n)$ and $\widehat{E}_{N,2}$ are both asymptotically normally distributed. In this case, such joint distribution is determined by the cross-covariance function of $\widetilde{Z}_n(k+j)$ and $Y(j)$ -- as claimed in Theorem \ref{t:E(n)dist}. On the other hand, when $3/2 < \alpha <2$, the convergence rate of $\widehat{E}_{N,2}$ is approximately $N^{2-\alpha}$, which is much slower than that of $\mathfrak{R}\widehat{E}_{N,1}(n)$. Thus, the limiting distribution of $\mathfrak{R}\widehat{E}_N(n)$ is still given by that of $\widehat{E}_{N,2}$ -- again as claimed in Theorem \ref{t:E(n)dist}. 


So, Proposition \ref{prop:E_1(n)} is stated next. In its proof, we make use of a general result on the asymptotic distribution of a sum of the form \eqref{e:V_N(t)} (for $R=1$) after standardization when the covariance function is $k$-th power absolutely summable (see Theorem \ref{thm:central}).
\begin{proposition}\label{prop:E_1(n)}
Let $Y$ be a centered Gaussian stationary process with $\bbE Y^2(0) = 1$. Let $\{\widetilde{Z}_n(k)\}_{k\in \bbZ}$ be the associated random sequence as defined by \eqref{e:defZn}, and suppose its autocovariance function satisfies \eqref{e:gammaZtilde(k)=O(k^(alpha-3))}.  Also, let $\widehat{E}_{N,1}(n)$ be its associated statistic \eqref{e:E1}. Then, as $N\rightarrow \infty$,
\begin{itemize}
  \item [($i$)] the real part of $\widehat{E}_{N,1}(n)$ satisfies
\begin{equation}\label{e:E_1(n)_real}
\sqrt{N-n+1} \bigg( \mathfrak{R}\widehat{E}_{N,1}(n) - \bbE \cos\bigg( \frac{\widetilde{Z}_n(0)}{\sigma(n)} \bigg) \bigg) \overset{d}{\rightarrow} \sigma_{1,n} B_1(1);
\end{equation}
  \item [($ii$)] and the imaginary part of $\widehat{E}_{N,1}(n)$ satisfies
\begin{equation}\label{e:E_1(n)_imag}
\sqrt{N-n+1} \, \mathfrak{I}\widehat{E}_{N,1}(n) \overset{d}{\rightarrow} \sigma_{2,n} B_2(1).
\end{equation}
\end{itemize}
In \eqref{e:E_1(n)_real} and \eqref{e:E_1(n)_imag},
\begin{equation}\label{e:sigma_12}
  \sigma_{r,n}^2 = \sum_{m=1}^{\infty} g_{r,n,m}^2 m! \sum_{\ell=-\infty}^{\infty} (\gamma_{\widetilde{Z}_n}(\ell))^m,\quad r=1,2,
\end{equation}
$g_{1,n,m}$ and $g_{2,n,m}$ are the coefficients in the Hermite expansion \eqref{e:g_k} of the functions
\begin{equation}\label{e:mixingG12n}
  G_{1,n}(x) = \cos(x/\sigma(n)),\quad G_{2,n}(x) = \sin(x/\sigma(n)),
\end{equation}
respectively, and $B_1(1)$ and $B_2(1)$ are standard normal random variables.
\end{proposition}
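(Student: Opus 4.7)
The plan is to rewrite both components of $\widehat{E}_{N,1}(n)$ as normalized partial sums of functionals of the stationary Gaussian sequence $\widetilde{Z}_n$, and then invoke the central limit theorem for such functionals (Theorem \ref{thm:central}). Since $Y(n+k) - Y(k) = \widetilde{Z}_n(k)/\sigma(n)$, the sums in \eqref{e:E1} become
\[
\mathfrak{R}\widehat{E}_{N,1}(n) = \frac{1}{N-n+1}\sum_{k=0}^{N-n} G_{1,n}(\widetilde{Z}_n(k)), \quad \mathfrak{I}\widehat{E}_{N,1}(n) = \frac{1}{N-n+1}\sum_{k=0}^{N-n} G_{2,n}(\widetilde{Z}_n(k)),
\]
with $G_{1,n}$ and $G_{2,n}$ as in \eqref{e:mixingG12n}. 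Both $G_{r,n}$ are bounded and therefore lie in $L^2(\bbR,\phi)$, and by symmetry of the Gaussian law $\bbE G_{2,n}(\widetilde{Z}_n(0)) = 0$, which accounts for the absence of centering in statement $(ii)$. The centering in $(i)$ is simply $\bbE G_{1,n}(\widetilde{Z}_n(0)) = \bbE \cos(\widetilde{Z}_n(0)/\sigma(n))$.

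Next I would check that the short-memory framework applies. The decay condition \eqref{e:gammaZtilde(k)=O(k^(alpha-3))} together with $\alpha < 2$ implies $\sum_{\ell \in \bbZ} \abs{\gamma_{\widetilde{Z}_n}(\ell)} < \infty$, and since $\abs{\gamma_{\widetilde{Z}_n}(\ell)} \leq 1$ this extends to $\sum_\ell \abs{\gamma_{\widetilde{Z}_n}(\ell)}^m < \infty$ for every $m \geq 1$. In particular the series defining $\sigma_{r,n}^2$ in \eqref{e:sigma_12} converges absolutely; here one also uses $\sum_m g_{r,n,m}^2 m! = \Var G_{r,n}(\widetilde{Z}_n(0)) < \infty$. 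Note that the Hermite ranks of $G_{1,n}$ and $G_{2,n}$ (equal to $2$ and $1$, respectively) play no role in the convergence rate here, since short memory yields the standard $\sqrt{N-n+1}$ rate in either case.

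With these ingredients in place, a direct application of Theorem \ref{thm:central} to $G_{1,n}$ and $G_{2,n}$ yields \eqref{e:E_1(n)_real} and \eqref{e:E_1(n)_imag}. The identification of the limiting variance as \eqref{e:sigma_12} follows from the Hermite expansion of $G_{r,n}$ combined with the orthogonality relation $\bbE[H_m(\widetilde{Z}_n(i)) H_{m'}(\widetilde{Z}_n(j))] = \delta_{m,m'}\, m!\, (\gamma_{\widetilde{Z}_n}(i-j))^m$, after an interchange of sums justified by the absolute summability noted above. I do not expect a serious obstacle: the proposition is essentially a structural translation step whose proof reduces to verifying the hypotheses of a standard short-memory CLT. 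The only delicate ingredients are the uniform bound $\abs{\gamma_{\widetilde{Z}_n}(\ell)} \leq 1$ used for dominated convergence, and the parity cancellation that makes $\bbE G_{2,n}(\widetilde{Z}_n(0)) = 0$.
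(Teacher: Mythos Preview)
Your proposal is correct and follows essentially the same approach as the paper: rewrite $\widehat{E}_{N,1}(n)$ as normalized partial sums of $G_{1,n}(\widetilde{Z}_n(k))$ and $G_{2,n}(\widetilde{Z}_n(k))$, observe that \eqref{e:gammaZtilde(k)=O(k^(alpha-3))} with $\alpha<2$ gives absolute summability of $\gamma_{\widetilde{Z}_n}$, and apply Theorem~\ref{thm:central}. Your write-up is in fact more detailed than the paper's, which simply invokes Theorem~\ref{thm:central} after the rewriting without spelling out the summability check, the parity argument for the vanishing centering in $(ii)$, or the convergence of the series~\eqref{e:sigma_12}.
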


We now turn to Proposition \ref{prop:E_2(n)}, which provides the limit law of $\widehat{E}_{N,2}$ in \eqref{e:E2}. Recall that $\widehat{E}_{N,2}$ is the sum of the squares of the sample averages of $\cos Y(n)$ and $\sin Y(n)$. Therefore, in order to describe the asymptotic distribution of $\widehat{E}_{N,2}$, we need to develop the limiting joint distribution of $\frac{1}{N+1} \sum_{k=0}^{N} \cos Y(k)$ and $\frac{1}{N+1} \sum_{k=0}^{N} \sin Y(k)$. As discussed in Section \ref{s:background}, the convergence rate of \eqref{e:V_N(t)} is a function that depends on both the Hermite rank of $G$ and the parameter $d = \alpha/2 - 1/2$. Thus, the convergence rates of $\frac{1}{N+1} \sum_{k=0}^{N} \cos Y(k)$ and $\frac{1}{N+1} \sum_{k=0}^{N} \sin Y(k)$ may be distinct, in which case one dominates the other. Then, the claim of the proposition is a consequence of the asymptotic behavior of random vectors when their covariance functions are $k$-th power absolutely summable or not (see Theorems \ref{thm:central_multi} and \ref{thm:noncentral_multi}, respectively).
\begin{proposition}\label{prop:E_2(n)}
Let $Y$ be a centered Gaussian stationary process with $\bbE Y^2(0) = 1$, and suppose its autocovariance function $\gamma_Y$ satisfies the decay condition \eqref{e:gammaY(k)=O(k^(alpha - 2))}. Let $\widehat{E}_{N,2}$ be its associated statistic \eqref{e:E2}. Then,
\begin{itemize}
  \item [($i$)] when $0< \alpha < 3/2$,
  \begin{equation}\label{e:e_2(n)alp<3/2}
    \sqrt{N+1} \{\widehat{E}_{N,2} - \abs{\bbE \cos Y(0)}^2\} \overset{d}{\rightarrow} 2 \hspace{0.5mm}\big(\bbE \cos Y(0)\big)  \hspace{0.5mm}\sigma_3 B_3(1),
  \end{equation}
as $N\rightarrow \infty$, where
  \begin{equation}\label{e:sigma3}
    \sigma^2_3 = \sum_{m=2}^{\infty} g_{\cos,m}^2 m! \sum_{k=-\infty}^{\infty} (\gamma_Y(k))^m,
  \end{equation}
and $g_{\cos,m}$ is given by \eqref{e:g_cossin};
  \item [($ii$)] when $3/2<\alpha<2$,
  $$
    \frac{(N+1)^{2-\alpha}}{ L(N+1)} \{\widehat{E}_{N,2} - \abs{\bbE \cos(Y(0))}^2 \}
    $$
    \begin{equation}\label{e:e_2(n)alp>3/2}
    \overset{d}{\rightarrow}
    g_{\cos,2}\beta_{2,\alpha - 1} \widehat{I}_{2}(f_{\alpha - 1,2,1})
    + g_{\sin,1}^2 \beta_{1,\alpha/2}^2 \widehat{I}_{1}^2(f_{\alpha/2,1,1}),
  \end{equation}
as $N\rightarrow \infty$, where $g_{\cos,2}$ and $g_{\sin,1}$ are given by \eqref{e:g_cossin}, $\beta_{2,\alpha - 1}$ and $\beta_{1,\alpha/2}$ are defined in \eqref{e:beta_kH}, $f_{H,k,t}$ is the kernel function defined in \eqref{e:f_Hkt}, $\widehat{I}_{k}(f)$ is defined in \eqref{e:widehatI_k(f)} and $L(N+1)$ is defined in \eqref{e:L_2(k)fGn}.
\end{itemize}
\end{proposition}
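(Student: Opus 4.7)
The plan is to reduce the analysis of $\widehat{E}_{N,2}$ to that of the sample means $\bar C_N := (N+1)^{-1}\sum_{k=0}^{N}\cos Y(k)$ and $\bar S_N := (N+1)^{-1}\sum_{k=0}^{N}\sin Y(k)$. Since $Y(0) \sim {\mathcal N}(0,1)$ is symmetric, $\bbE\sin Y(0) = 0$ and $|\bbE\cos Y(0)|^2 = (\bbE\cos Y(0))^2$, and the identity $a^2 - b^2 = 2b(a-b) + (a-b)^2$ gives
$$
\widehat{E}_{N,2} - |\bbE \cos Y(0)|^2 = 2\,\bbE\cos Y(0)\,\big(\bar C_N - \bbE\cos Y(0)\big) + \big(\bar C_N - \bbE\cos Y(0)\big)^2 + \bar S_N^2.
$$
I then expand using Hermite polynomials: by \eqref{e:mixingcosh1h2} and \eqref{e:g_cossin}, $\cos Y(k) - \bbE\cos Y(0) = \sum_{m\geq 2} g_{\cos,m} H_m(Y(k))$ has Hermite rank $2$ (the odd coefficients vanish by symmetry) while $\sin Y(k) = \sum_{m\geq 1} g_{\sin,m} H_m(Y(k))$ has Hermite rank $1$. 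Under \eqref{e:gammaY(k)=O(k^(alpha - 2))}, the summability of $\sum_k(\gamma_Y(k))^m$ at $m=1,2$ dictates whether Theorem \ref{thm:central} or Theorem \ref{thm:noncentral_multi} drives the limit.

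For case $(i)$, $0 < \alpha < 3/2$, one has $\sum_k(\gamma_Y(k))^2 < \infty$, so Theorem \ref{thm:central} applied to $\sum_{k=0}^{N}\{\cos Y(k) - \bbE\cos Y(0)\}$ yields $\sqrt{N+1}(\bar C_N - \bbE\cos Y(0)) \overset{d}{\to} \sigma_3 B_3(1)$ with $\sigma_3^2$ as in \eqref{e:sigma3}. A direct variance bound based on the Hermite-rank-$1$ leading term in $\sin$ gives $\bar S_N = O_p((N+1)^{-1/2})$ when $\alpha \leq 1$ and $\bar S_N = O_p((N+1)^{\alpha/2-1}L(N+1)^{1/2})$ when $1 < \alpha < 3/2$, so in both subcases $\bar S_N^2 = o_p((N+1)^{-1/2})$. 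Likewise $(\bar C_N - \bbE\cos Y(0))^2 = O_p((N+1)^{-1}) = o_p((N+1)^{-1/2})$. Slutsky's lemma applied to the identity above then delivers \eqref{e:e_2(n)alp<3/2}.

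For case $(ii)$, $3/2 < \alpha < 2$, $\sum_k(\gamma_Y(k))^2$ diverges and the rank-$2$ term of $\cos$ falls into the non-central regime. Writing $\bar C_N - \bbE\cos Y(0) = g_{\cos,2}(N+1)^{-1}\sum_{k=0}^{N} H_2(Y(k)) + R_{N,\cos}$ and $\bar S_N = g_{\sin,1}(N+1)^{-1}\sum_{k=0}^{N} Y(k) + R_{N,\sin}$, a term-by-term Hermite rank comparison shows that each contribution to $R_{N,\cos}$, $R_{N,\sin}$ from order $m\geq 3$ is of strictly smaller order than $L(N+1)(N+1)^{\alpha-2}$ and $L(N+1)^{1/2}(N+1)^{\alpha/2-1}$, respectively (the rank-$m$ rate decays geometrically in $m$ when $\alpha<2$). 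Applying Theorem \ref{thm:noncentral_multi} jointly to the rank-$1$ and rank-$2$ sums at their respective nonstandard rates yields joint convergence to $\beta_{1,\alpha/2}\widehat{I}_1(f_{\alpha/2,1,1})$ and $\beta_{2,\alpha-1}\widehat{I}_2(f_{\alpha-1,2,1})$ in the first and second Wiener chaos. The continuous mapping theorem applied to $(x,y) \mapsto 2\,\bbE\cos Y(0)\,g_{\cos,2}\,x + g_{\sin,1}^2 y^2$, together with $(\bar C_N - \bbE\cos Y(0))^2(N+1)^{2-\alpha}/L(N+1) = O_p(L(N+1)(N+1)^{\alpha-2}) \to 0$, then produces \eqref{e:e_2(n)alp>3/2}.

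The main technical obstacle is the joint non-central step in case $(ii)$: establishing joint weak convergence of two Hermite-polynomial sums with \emph{different} nonstandard rates, whose limits live in different Wiener chaoses -- a single integral $\widehat{I}_1(f_{\alpha/2,1,1})$ and a double integral $\widehat{I}_2(f_{\alpha-1,2,1})$ -- both driven by the same complex Gaussian random measure $\widetilde B$, while simultaneously controlling the residuals $R_{N,\cos}$, $R_{N,\sin}$ uniformly in $N$. This calls for an $L^2$-truncation of the Hermite series, with residual variances bounded using only the decay \eqref{e:gammaY(k)=O(k^(alpha - 2))}, combined with the multivariate diagram formalism underlying Theorem \ref{thm:noncentral_multi}.
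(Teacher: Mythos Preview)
Your argument for part $(i)$ is essentially identical to the paper's: both expand $\widehat{E}_{N,2} - |\bbE\cos Y(0)|^2$ into the linear term $2\,\bbE\cos Y(0)\,(\bar C_N - \bbE\cos Y(0))$ plus two quadratic remainders, invoke Theorem~\ref{thm:central} for the cosine sum (Hermite rank $2$, so $\sum_k\gamma_Y(k)^2<\infty$ suffices when $\alpha<3/2$), and kill the two squares by Slutsky after splitting the sine analysis at $\alpha=1$. The only cosmetic difference is that the paper disposes of $\bar S_N^2$ by actually citing Theorems~\ref{thm:central} and~\ref{thm:noncentral} for the sine sum in the two sub-ranges, whereas you phrase it as a variance bound; the content is the same.

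For part $(ii)$ your route is correct but needlessly indirect. You peel off the leading Hermite terms $g_{\cos,2}H_2$ and $g_{\sin,1}H_1$, propose to bound the remainders $R_{N,\cos}$, $R_{N,\sin}$ by an $L^2$-truncation argument, and then apply Theorem~\ref{thm:noncentral_multi} to the pure chaos sums. The paper instead applies Theorem~\ref{thm:noncentral_multi} \emph{directly} to the pair $G_1=\cos$, $G_2=\sin$: since their Hermite ranks are $k_1=2$ and $k_2=1$, and for $3/2<\alpha<2$ one has $d=\alpha/2-1/2\in(1/4,1/2)\subset\big(\tfrac12(1-1/k_r),\tfrac12\big)$ for both $r$, the hypotheses of that theorem are met as stated. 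The reduction to the leading chaos and the control of higher-order Hermite contributions are already built into Theorem~\ref{thm:noncentral_multi}; the ``main technical obstacle'' you flag is therefore not an obstacle at all, and no separate truncation or residual estimate is needed. After the joint convergence \eqref{e:e_2(n)alp>3/2multi} is in hand, the paper finishes exactly as you do: Slutsky for the vanishing square and the continuous mapping theorem for the surviving terms.
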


\begin{remark}
Theorem \ref{t:E(n)dist} does not cover the instance $\alpha = 3/2$. The borderline case between weak and strong superdiffusion is usually not encompassed by central or non-central limit theorems and requires separate efforts. In mathematically similar circumstances to those considered in Theorem \ref{t:E(n)dist}, the asymptotic distribution can be either Gaussian or non-Gaussian, usually with a nonstandard convergence rate (see, for instance, \cite{guyon:leon:1989}, \textit{Remarque} II-3.3; \cite{buchmann:chan:2009}, Corollary 2.1; \cite{zhang:crizer:schoenfisch:hill:didier:2018}, Table 1).
\end{remark}

We illustrate the claims in Theorem \ref{t:E(n)dist} by means of a Monte Carlo study. Figure \ref{fig:histfitE(n)} shows histograms of the finite sample distribution of $\widehat{E}_N(n)$, where the red normal curve uses the sample mean and sample variance. The three plots on the left-hand side display histograms for ${\mathfrak R}\widehat{E}_N(n)$, whereas the plots on the right-hand side are for ${\mathfrak I}\widehat{E}_N(n)$. In the simulation, the value of $\alpha$ for the top, middle and bottom plots are 0.6, 1, 1.8 respectively. The parameter $n$ is set to 30, and the path length is $2^{10} = 1024$. As expected, all the plots indicate a Gaussian distribution, with one exception, namely, the plot for ${\mathfrak R}\widehat{E}_N(n)$ when $\alpha = 1.8 > 3/2$. The latter is right-skewed as a consequence of the fact that the distribution is a linear combination of a Rosenblatt-type and a chi-squared distribution, both of which are, indeed, right-skewed (see \cite{veillette:taqqu:2013} on the former distribution).
\begin{figure}[htbp]
\begin{center}
\includegraphics[scale=0.4]{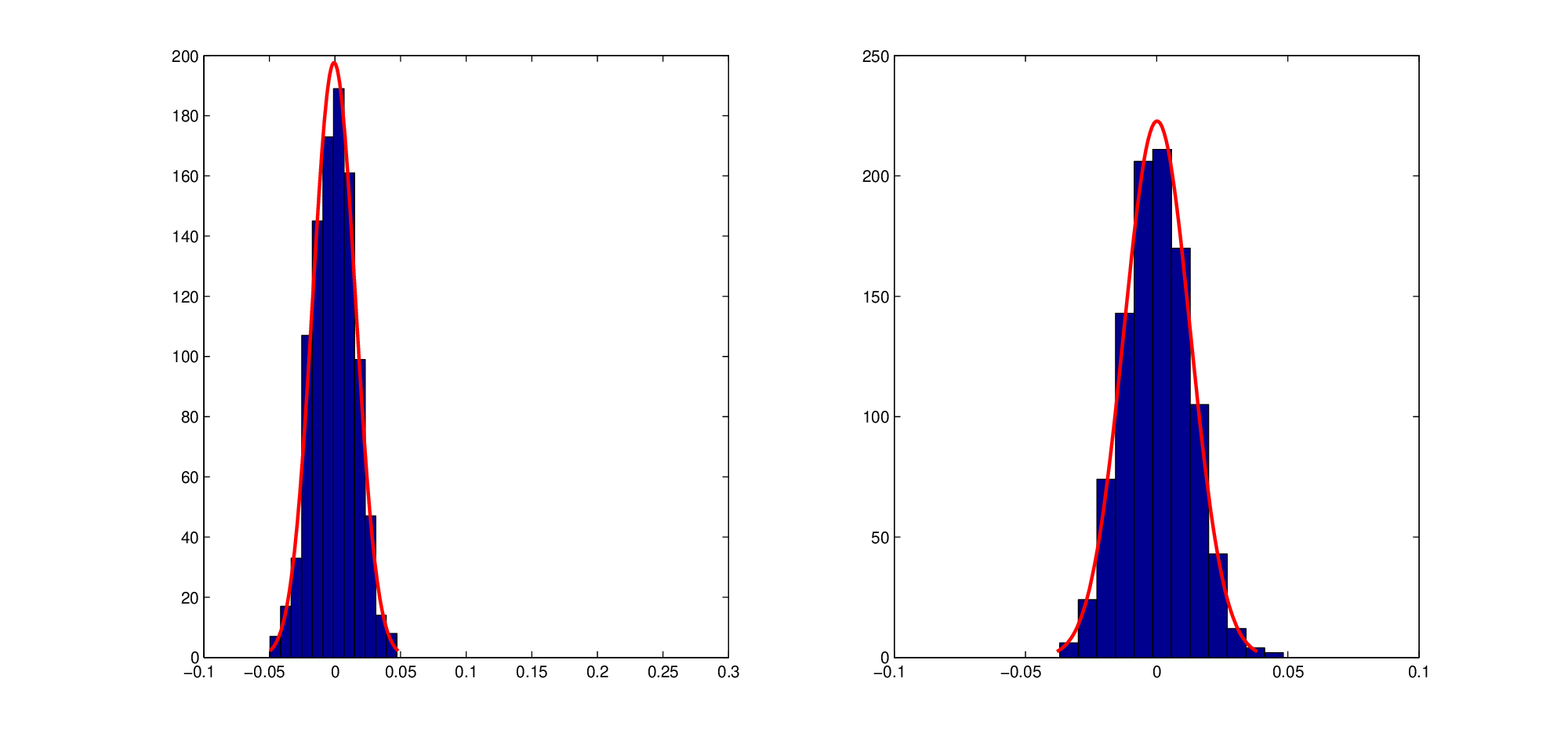}\\
\includegraphics[scale=0.4]{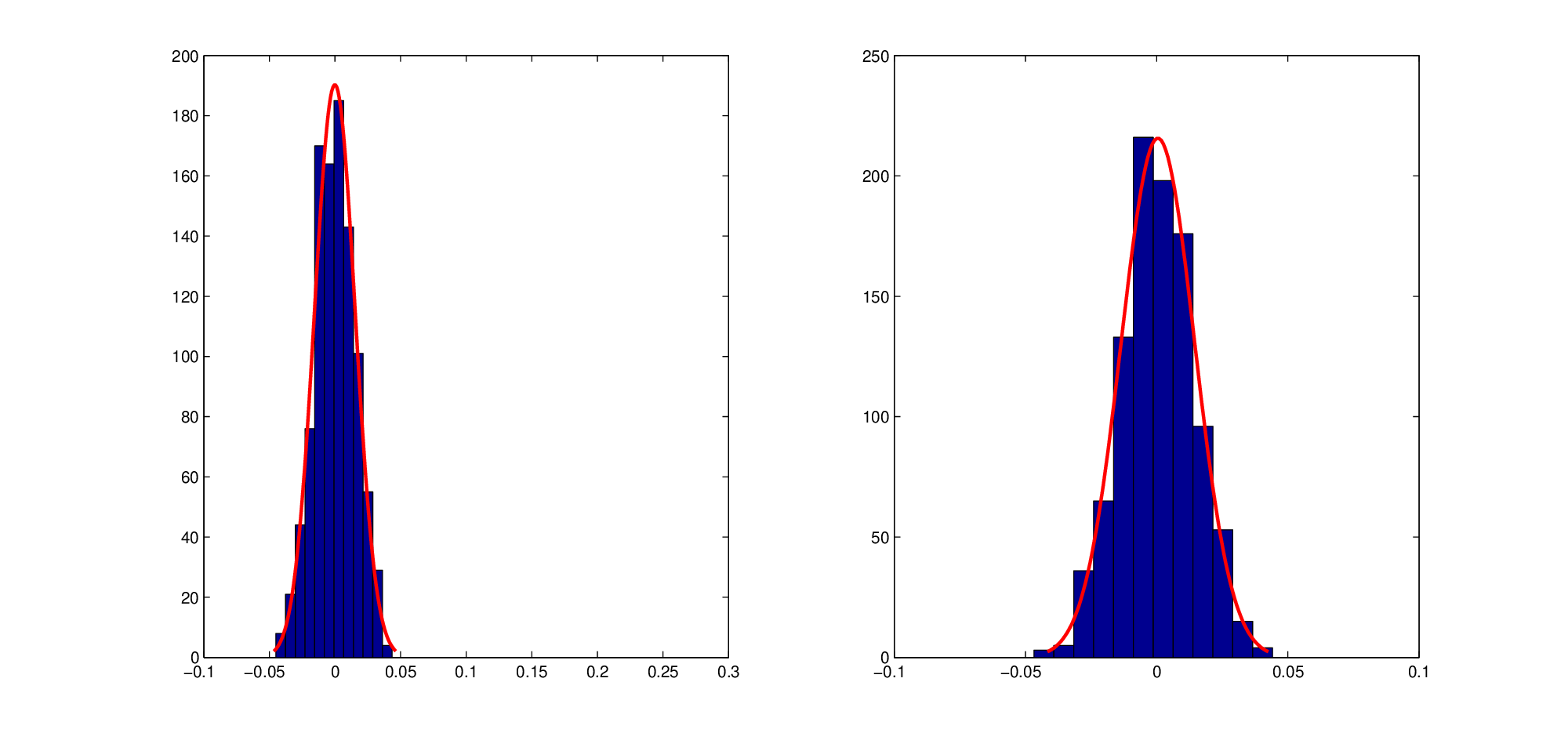}\\
\includegraphics[scale=0.4]{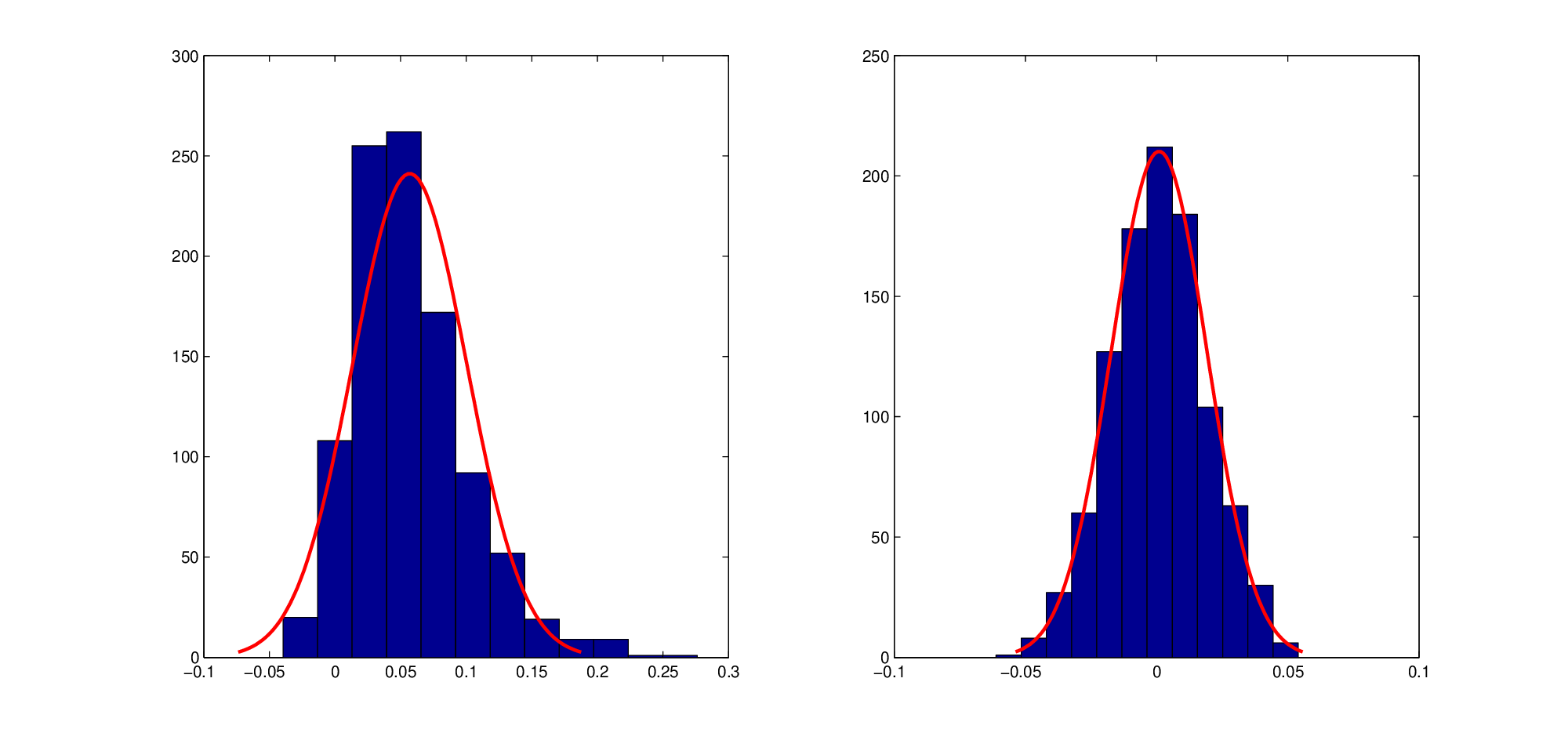}
\caption{Histogram of $\widehat{E}_N(n)$ with fitted normal curve. Left: ${\mathfrak R}\widehat{E}_N(n)$. Right: ${\mathfrak I}\widehat{E}_N(n)$. Top: $\alpha = 0.6$. Middle: $\alpha = 1.0$. Bottom: $\alpha = 1.8$.}
\label{fig:histfitE(n)}
\end{center}
\end{figure}

Figure \ref{fig:conv_rateE(n)} depicts a Monte Carlo study of the convergence rate of $\widehat{E}_N(n)$. In the simulations, $n=30$, $\alpha = 0.6, 1, 1.8$, $N = 2^{9}, 2^{10}, 2^{11},2^{12},2^{13},2^{14}$. As expected, in the right plot, the convergence rate of $\mathfrak{I}\widehat{E}_N(n)$ for different values of $\alpha$ is the same. In the left plot, the estimated slope for $\alpha = 1.8$ is greater than that for $\alpha = 1$ and $\alpha = 0.6$, which in turn are equal. This illustrates the fact that $\mathfrak{R}\widehat{E}_N(n)$ has a nonstandard convergence rate when $\alpha = 1.8$.
\begin{figure}[htbp]
\begin{center}
\includegraphics[scale=0.5]{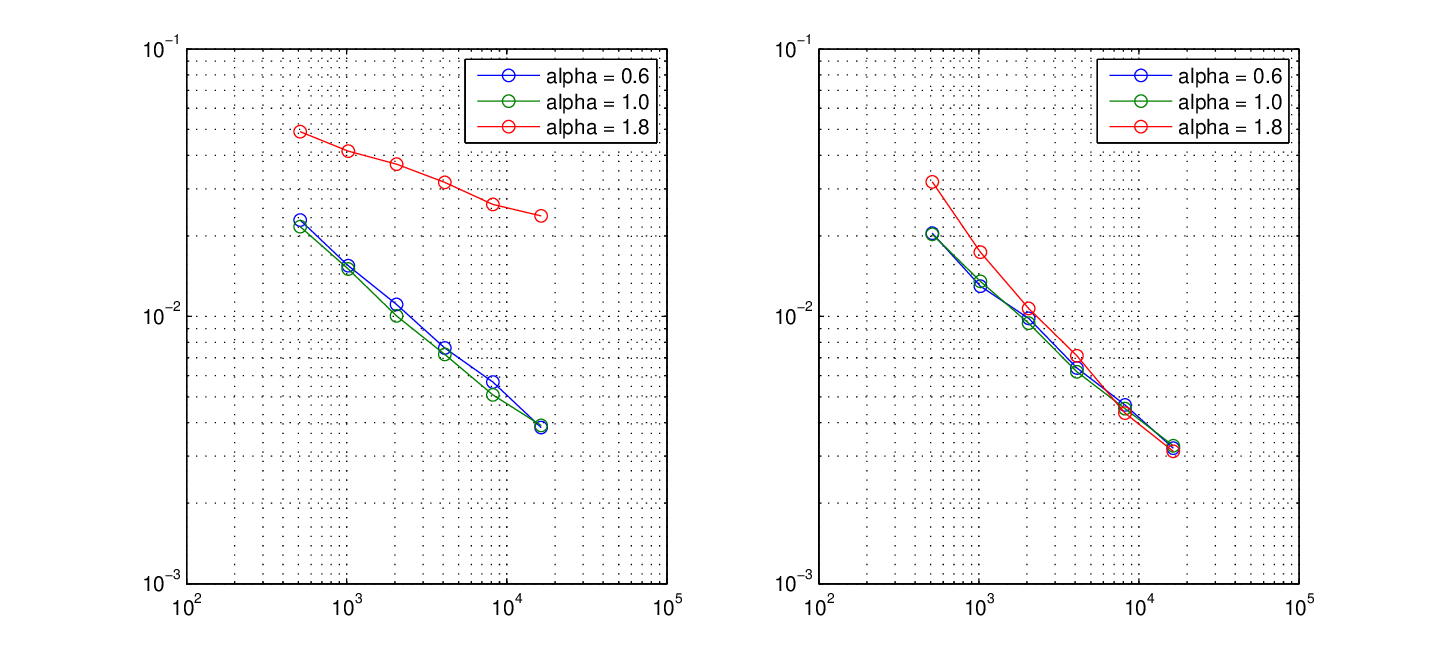}
\caption{$x$-axis: $\log (N+1)$; $y$-axis: $\log$ of the sample standard deviation. Left: ${\mathfrak R}\widehat{E}_N(n)$. Right: ${\mathfrak I}\widehat{E}_N(n)$.}
\label{fig:conv_rateE(n)}
\end{center}
\end{figure}

\begin{remark}\label{r:multiple_n}
Although Theorem \ref{t:E(n)dist} provides the asymptotic distribution of $\widehat{E}_N(n)$ for a single value of $n$, similar techniques can be used to develop the asymptotic distribution of the random vector
\begin{equation}\label{e:vector_EN(n)}
\big(\widehat{E}_N(n_1),\hdots,\widehat{E}_N(n_j)\big), \quad  N \rightarrow \infty,
\end{equation}
for $n_1,\hdots,n_j \in \bbN$.
\end{remark}

\subsection{Discussion: consequences for physical modeling}\label{s:discussion}



As mentioned in the introduction, Theorem \ref{t:E(n)dist} allows us to quantify the uncertainty involved in the use of the estimator $\widehat{E}_N(n)$, and also paves the way for robust mixing detection procedures based on a single observed path $\{Y(k)\}_{k=0,1\hdots,N}$ (cf.\ \cite{magdziarz:weron:2011}, Section IV). In other words, starting from Theorem \ref{t:E(n)dist}, it is possible to develop many mixing detection methods with quantifiable or estimable error margins. For the sake of illustration, we provide a discussion.

In the parametric range $0 < \alpha < 3/2$, this can be done by means of Gaussian-based confidence intervals or hypothesis tests (for a summary of the following procedure in the form of pseudocode, see Section \ref{s:pseudocode}). We can frame the null hypothesis of mixing detection as
\begin{equation}\label{e:H0}
H_0: \textnormal{the anomalous diffusion is mixing}.
\end{equation}
To fix ideas, under the assumptions of Theorem \ref{t:E(n)dist}, suppose for the moment that the following parameters are known, namely,
\begin{equation}\label{e:Re_part_centering}
\mu_{{\mathfrak R}} :=  \bbE \cos\bigg(\frac{\widetilde{Z}_n(0)}{\sigma(n)}\bigg) - \abs{\bbE \cos Y(0)}^2 = e^{-(1-\gamma_Y(n))}- e^{-1} 
\end{equation}
and
\begin{equation}\label{e:asympt_variances}
\theta^2_{{\mathfrak R}} := \Var\big[ \sigma_{1,n} B_1(1) + 2 \hspace{0.5mm}\big(\bbE \cos Y(0)\big) \hspace{1mm}\sigma_3 B_3(1) \big], \quad \theta^2_{{\mathfrak I}} := \Var\big[\sigma_{2,n} B_2(1)\big].
\end{equation}
They correspond, respectively, to the centering term for ${\mathfrak R}\widehat{E}_N(n)$ and to the asymptotic variances of ${\mathfrak R}\widehat{E}_N(n)$ and ${\mathfrak I}\widehat{E}_N(n)$. Further recall that $\bbE \mathfrak{I} \widehat{E}_N(n) = 0$ and that $B_2(1)$ is uncorrelated with $B_\ell(1)$, $\ell =1,3$. By Theorem \ref{t:E(n)dist},
\begin{equation}\label{e:chi-2_type_statistic}
{\mathcal W}_N:=\Big\{\frac{\sqrt{N+1} \big(\hspace{0.5mm}\mathfrak{R} \widehat{E}_N(n)-\mu_{{\mathfrak R}}\big)}{\theta_{{\mathfrak R}}} \Big\}^2+ \Big\{\frac{\sqrt{N+1}\hspace{0.5mm} \mathfrak{I} \widehat{E}_N(n)}{\theta_{{\mathfrak I}}}\Big\}^2 \stackrel{d}\rightarrow \chi^2_2, \quad N \rightarrow \infty.
\end{equation}
The limit in distribution \eqref{e:chi-2_type_statistic} can now be used to test \eqref{e:H0} in a Fisherian style \cite{christensen:2005}. In other words, at a preset significance level $\eta \in [0,1]$, we define the rejection region of $H_0$ as
\begin{equation}\label{e:R_eta}
R_{\eta} = \{w: w > \chi^2_{2}(\eta)\} \subseteq \bbR.
\end{equation}
In \eqref{e:R_eta}, $\chi^2_{2}(\eta)$ is the scalar such that $\bbP(W > \chi^2_{2}(\eta)) = \eta$ for a random variable $W \sim \chi^2_2$. The limit in distribution \eqref{e:chi-2_type_statistic} ensures that, for large $N$ and under the assumptions of Theorem \ref{t:E(n)dist}, the conditional probability
$$
\bbP( {\mathcal W}_N  \in R_{\eta} | \textnormal{$H_0$ is true})
$$
is close to $\eta$.

Since the parameters $\mu_{{\mathfrak R}}$, $\theta^2_{{\mathfrak R}}$ and $\theta^2_{{\mathfrak I}}$ are generally unknown, in practice they need to be estimated. Theorem \ref{t:E(n)dist} -- in particular, expressions \eqref{e:corrB1B3} and \eqref{e:sigma_12} -- shows that, for this purpose, one needs a better understanding of the correlation structure of $Y$ and $\widetilde{Z}_n$. This can be obtained by first estimating $\alpha$ by means of the $\MSD$, for example, as commonly done in the biophysical literature (e.g., \cite{valentine:kaplan:thota:crocker:gisler:prudhomme:beck:weitz:2001,lieleg:vladescu:ribbeck:2010,zhang:crizer:schoenfisch:hill:didier:2018}). So, based on an estimator $\widehat{\alpha}$, we can obtain an approximation $\widehat{\gamma}_{Y}(n)$ (e.g., by further assuming $\gamma_Y(n)$ is close to the covariance of a fGn with parameter $\widehat{H} = \widehat{\alpha}/2$). This way, we arrive at the estimator
\begin{equation}\label{e:Re_part_centering_estimator}
\widehat{\mu}_{{\mathfrak R}} :=  e^{-(1- \widehat{\gamma}_Y(n))}- e^{-1}.
\end{equation}
Since $\sigma^2(n) = (2[\Var Y(0) - \gamma_{Y}(n)])^{-1}$, then we can further estimate
\begin{equation}\label{e:sigma2-hat}
\widehat{\sigma}^2(n) = (2[\Var Y(0) - \widehat{\gamma}_{Y}(n)])^{-1}.
\end{equation}
So, starting from the relation $\widehat{G}_{2,n}(x) := \sin(x/\widehat{\sigma}(n))$, we can compute $\widehat{g}_{2,n,m} = (1/m!)(\widehat{G}_{2,n},H_m)_{L^2(\bbR,\phi)}$. In turn, in view of \eqref{e:gammaZtilde(k)=O(k^(alpha-3))}, we can approximate
\begin{equation}\label{e:gamma-hat_Z-tilde(k)}
\widehat{\gamma}_{\widetilde{Z}_n}(k) = O(k^{\widehat{\alpha}-3}).
\end{equation}
Hence, since $\Var B_2(1) = 1$, in view of \eqref{e:asympt_variances} we obtain the estimator
\begin{equation}\label{e:theta^2_Im_estimator}
\widehat{\theta^2_{\Im}} =  \widehat{\sigma^2_{2,n}} = \sum_{m=1}^{M_2} \widehat{g}^2_{2,n,m}\hspace{0.5mm} m! \sum_{\ell=-L_2}^{L_2} (\widehat{\gamma}_{\widetilde{Z}_n}(\ell))^m
\end{equation}
for preset (and large enough) values of $M_2$ and $L_2$. On the other hand, based on \eqref{e:sigma2-hat} and on the relation $\widehat{G}_{1,n}(x) := \cos(x/\widehat{\sigma}(n))$, we can compute $\widehat{g}_{1,n,m} = (1/m!)(\widehat{G}_{1,n},H_m)_{L^2(\bbR,\phi)}$. Again based on the estimator $\widehat{\alpha}$, we can compute an approximation $\widehat{\gamma}_{\widetilde{Z}_n,Y}$. So, in view of \eqref{e:gamma-hat_Z-tilde(k)}, for preset (and large enough) values of $L_1$, $M_1$, $L$ and $M$ we can estimate
$$
\widehat{\sigma^2_{1,n}} = \sum_{m=1}^{M_1} \widehat{g}^2_{1,n,m}\hspace{0.5mm} m! \sum_{\ell=-L_1}^{L_1} (\widehat{\gamma}_{\widetilde{Z}_n}(\ell))^m, \quad \widehat{\sigma^2_{3}} = \sum_{m=1}^{M_3} \widehat{g}^2_{\cos,m}\hspace{0.5mm} m! \sum_{\ell=-L_3}^{L_3} (\widehat{\gamma}_{Y}(\ell))^m
$$
and
$$
\widehat{\cov}(B_1(1), B_3(1)) = - \sum^{L}_{\ell=-L}\sum_{m = 2}^{M} \widehat{g}_{1,n,m} \hspace{0.5mm}\widehat{g}_{\cos,m} \hspace{0.5mm}m!  (\widehat{\gamma}_{\widetilde{Z}_n,Y}(\ell))^m.
$$
Therefore, in view of \eqref{e:asympt_variances} and since $\Var B_1(1) = 1 = \Var B_3(1)$, we obtain the estimator
\begin{equation}\label{e:theta^2_Re_estimator}
\widehat{\theta^2_{\Re}} = \widehat{\sigma^2_{1,n}}+ 4 e^{-2} \cdot \widehat{\sigma^2_{3}} + 4 e^{-1}\cdot \sqrt{\widehat{\sigma^2_{1,n}}} \hspace{0.5mm}\widehat{\cov}(B_1(1), B_3(1)).
\end{equation}

Specifically in regard to the parameters $\theta^2_{{\mathfrak R}}$ and $\theta^2_{{\mathfrak I}}$, one alternative to \eqref{e:theta^2_Im_estimator} and \eqref{e:theta^2_Re_estimator} is to use resampling methods (e.g., \cite{lahiri:2013}).

In the strongly superdiffusive regime $3/2 < \alpha < 2$, Theorem \ref{t:E(n)dist} reveals that using traditional, Gaussian-inspired procedures such as \eqref{e:R_eta} potentially leads to significant inaccuracy (cf.\ \cite{zhang:crizer:schoenfisch:hill:didier:2018}, Section 4.1, on related issues involving the $\MSD$). In fact, on one hand, for $3/2 < \alpha < 2$ the term $\mathfrak{R} \widehat{E}_N(n)$ displays the unconventional convergence rate $\frac{(N+1)^{2-\alpha}}{L(N+1)}$ (instead of the standard rate $\sqrt{N+1}$). This leads to arbitrarily large error margins, since, by \eqref{e:weak_limit_Re_E-hat_N(n)_3/2<alpha<2},
$$
\sqrt{N+1} \bigg( \mathfrak{R} \widehat{E}_N(n) - \bbE \cos\bigg(\frac{\widetilde{Z}_n(0)}{\sigma(n)}\bigg) + \abs{\bbE \cos(Y(0))}^2 \bigg)
$$
$$
= \frac{\sqrt{N+1}}{(N+1)^{2-\alpha}/ L(N+1)}\cdot \frac{(N+1)^{2-\alpha}}{L(N+1)} \bigg( \mathfrak{R} \widehat{E}_N(n) - \bbE \cos\bigg(\frac{\widetilde{Z}_n(0)}{\sigma(n)}\bigg) + \abs{\bbE \cos(Y(0))}^2 \bigg)
$$
$$
\stackrel{\bbP}\sim \infty \cdot \Big( g_{\cos,2}\beta_{2,\alpha - 1} \widehat{I}_{2}(f_{\alpha - 1,2,1})+ g_{\sin,1}^2 \beta_{1,\alpha/2}^2 \widehat{I}_{1}^2(f_{\alpha/2,1,1}) \Big), \quad N \rightarrow \infty.
$$
On the other hand, the asymptotic distribution of $\mathfrak{R} \widehat{E}_N(n)$ is of Hermite type. In fact, partly due to their complexity, the use of Hermite-type distributions in modeling is still an active topic of research in the Probability and Statistics literature (e.g., \cite{bardet:tudor:2014,bai:taqqu:2018}).

In general, an interesting and natural idea is to start from the random vector \eqref{e:vector_EN(n)}, based on multiple values $n_1,\hdots,n_j$, as to arrive at a testing procedure with greater power -- namely, the ability to reject the null hypothesis \eqref{e:H0} when it is false. Nevertheless, this further involves carefully handling the correlations appearing in the asymptotic distribution of \eqref{e:vector_EN(n)}.

\section{Conclusion and open problems}\label{s:conclusion_and_open_problems}
The statistic $\widehat{E}_N(n)$ is proposed in \cite{magdziarz:weron:2011} for mixing detection in anomalous diffusion. In this paper, we establish the asymptotic distribution of $\widehat{E}_N(n)$ for fixed $n$ as $N \rightarrow \infty$. We assume the underlying stochastic process $Y$ is a fractional Gaussian stationary sequence and a single sample path is available. We show that ${\mathfrak I} \widehat{E}_N(n)$ always converges to a Gaussian distribution at a standard rate of convergence. On the other hand, depending on the anomalous regime ($\alpha$ parameter range), ${\mathfrak R} \widehat{E}_N(n)$ converges to a Gaussian or non-Gaussian distribution at standard or nonstandard convergence rates.

This work leads to a number of open problems and research directions. These include
\begin{itemize}
\item [$(i)$] as discussed in Section \ref{s:discussion}, investigating the finite sample performance of specific mixing detection procedures based on $\widehat{E}_N(n)$ and with controllable error margins;
\item [$(ii)$] studying the asymptotic behavior of $\widehat{E}_N(n)$ in the double limit $n,N \rightarrow \infty$, for increased accuracy;
\item [$(iii)$] understanding the asymptotic behavior of $\widehat{E}_N(n)$ or related statistics starting from mathematically different classes of fractional processes such as non-Gaussian infinitely divisible processes or continuous time random walks. This would allow for the quantification of the uncertainty involved in testing the null hypothesis \eqref{e:H0} starting from these alternative classes of anomalous diffusion models. In particular, this naturally leads to the question of how robust the use of $\widehat{E}_N(n)$ is, in terms of error margins, over broad classes of Gaussian and non-Gaussian anomalous diffusion models;
\item [$(iv)$] establishing the asymptotic behavior of ergodicity detection statistics such as the one proposed in \cite{magdziarz:weron:2011}.
\end{itemize}

\section{Acknowledgments}

The authors are grateful to an anonymous reviewer for the constructive comments that led to a significantly improved manuscript.

\appendix

\setcounter{equation}{0}

\section{Central and non-central limit theorems}\label{s:central_non-central}

\renewcommand{\theequation}{\thesection.\arabic{equation}}


For the reader's convenience, in this section we provide basic results on moments and central or non-central limit theorems for Gaussian stationary processes. Proposition \ref{prop:g1g2cov} and Theorems \ref{thm:noncentral}, \ref{thm:central}, \ref{thm:central_multi} and \ref{thm:noncentral_multi} correspond to Proposition 5.1.4 and Theorems 5.3.1, 5.4.1, 5.7.1 and 5.7.4, respectively, in \cite{pipiras:taqqu:2017}. Hereinafter, $\overset{\textnormal{f.d.d.}}{\rightarrow}$ denotes the convergence of finite-dimensional distributions.

Let $(X,Y)^{\top}$ be a Gaussian random vector. The following proposition allows us to calculate the covariance between $G_1(X)$ and $G_2(Y)$, where $G_1$ and $G_2$ are suitable transformations.
\begin{proposition}\label{prop:g1g2cov}
Let $(X,Y)^{\top}$ be a Gaussian vector with $\bbE X = \bbE Y = 0$ and $\bbE X^2 = \bbE Y^2 = 1$. Suppose $G_1,G_2 \in L^2(\bbR, \phi)$ and let $g_{1,n}$ and $g_{2,n}$, $n\geq 0$, be the coefficients in the Hermite expansions of $G_1$ and $G_2$, respectively, as in \eqref{e:g_k}. Then,
$$
\bbE G_1(X)G_2(Y) = \sum_{m=0}^{\infty} g_{1,m}g_{2,m}m!(\bbE XY)^m
$$
and
$$
\cov (G_1(X),G_2(Y)) = \sum_{m=1}^{\infty} g_{1,m}g_{2,m}m!(\bbE XY)^m.
$$
\end{proposition}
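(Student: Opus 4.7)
The strategy is to reduce the statement to the single fundamental orthogonality relation
\begin{equation*}
\bbE[H_m(X) H_n(Y)] = \delta_{mn}\, m!\, \rho^m, \qquad \rho := \bbE XY,
\end{equation*}
valid for every jointly standard Gaussian pair $(X,Y)$, and then to combine it with the $L^2(\bbR,\phi)$ Hermite expansions of $G_1$ and $G_2$. Once this is in hand, the covariance formula follows simply by peeling off the $m=0$ term.

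\emph{Step 1: orthogonality via generating functions.} Starting from the generating function identity $\sum_{n\geq 0}(t^n/n!) H_n(x) = e^{tx - t^2/2}$, I would compute directly, for real parameters $s,t$,
\begin{equation*}
\bbE\bigl[e^{sX - s^2/2}\, e^{tY - t^2/2}\bigr]
= e^{-(s^2+t^2)/2}\, \bbE e^{sX+tY}
= e^{-(s^2+t^2)/2}\, e^{(s^2 + 2st\rho + t^2)/2}
= e^{st\rho},
\end{equation*}
where the Gaussian moment generating function is used in the second equality. Expanding both sides in the double power series in $(s,t)$ and matching coefficients of $s^m t^n/(m!n!)$ yields the claimed orthogonality.

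\emph{Step 2: passage to the limit.} By hypothesis $G_r \in L^2(\bbR,\phi)$, so Parseval gives $\sum_{m \geq 0} g_{r,m}^2\, m! = \bbE G_r^2(Z) < \infty$ for $r = 1,2$. Setting $G_r^{(M)} = \sum_{m=0}^M g_{r,m} H_m$, one has $G_r^{(M)} \to G_r$ in $L^2(\phi)$, hence $G_r^{(M)}(X) \to G_r(X)$ and $G_r^{(M)}(Y) \to G_r(Y)$ in $L^2(\bbP)$ since $X$ and $Y$ are each marginally standard normal. Two successive applications of Cauchy-Schwarz then give $\bbE[G_1^{(M)}(X) G_2^{(M)}(Y)] \to \bbE[G_1(X) G_2(Y)]$. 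On the other hand, by Step~1,
\begin{equation*}
\bbE[G_1^{(M)}(X) G_2^{(M)}(Y)] = \sum_{m=0}^{M} g_{1,m} g_{2,m}\, m!\, \rho^m,
\end{equation*}
and Cauchy-Schwarz on $(g_{r,m}\sqrt{m!})_m$ together with $|\rho|\leq 1$ shows that this series converges absolutely, so its limit as $M\to \infty$ equals $\sum_{m\geq 0} g_{1,m} g_{2,m}\, m!\, \rho^m$. Equating both limits yields the first identity. The covariance formula is then immediate: since $H_0\equiv 1$, $\bbE G_r(X) = g_{r,0} = \bbE G_r(Y)$, and subtracting the $m=0$ term gives the second displayed identity.

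The main obstacle, and truly the only nontrivial part, is the justification of the termwise interchange between infinite summation and expectation; once the $L^2$ approximation and the absolute convergence of $\sum_m g_{1,m} g_{2,m}\, m!\, \rho^m$ are established, everything else is clean algebra driven by the generating-function computation.
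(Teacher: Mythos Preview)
Your argument is correct. Note, however, that the paper does not supply its own proof of this proposition: it is quoted without proof as Proposition~5.1.4 of Pipiras and Taqqu (2017), so there is no in-paper argument to compare against. The generating-function route you take---computing $\bbE\bigl[e^{sX-s^2/2}e^{tY-t^2/2}\bigr]=e^{st\rho}$ and matching coefficients to obtain $\bbE[H_m(X)H_n(Y)]=\delta_{mn}\,m!\,\rho^m$, followed by an $L^2(\phi)$ truncation and a Cauchy--Schwarz passage to the limit---is in fact the standard proof found in that reference, so your approach aligns with the cited source.
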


The following theorem establishes that the (weak) limit of \eqref{e:V_N(t)} for $R = 1$ is an Hermite process. Recall that the Hermite process is only Gaussian when the Hermite rank of $G$ is $k=1$.
\begin{theorem}\label{thm:noncentral}
 Let $\{X_n\}_{n \in \bbN}$ be a centered stationary Gaussian sequence with autocovariance function $\gamma_X$ as in \eqref{e:gamma_X} and such that $\bbE X^2_1 = 1$. Let $G$ be a function of Hermite rank $k\geq 1$ and suppose that $d$ as in \eqref{e:gamma_X} satisfies
  $$
  d\in \bigg( \frac{1}{2}\bigg( 1 - \frac{1}{k} \bigg), \frac{1}{2} \bigg).
  $$
  Then,
\begin{equation}\label{e:noncentral_ffd}
  \frac{1}{(L(N))^{k/2} N^{k(d-1/2)+1}} \sum_{n=1}^{[Nt]} \big( G(X_n) - \bbE G(X_n) \big) \overset{\textnormal{f.d.d.}}{\rightarrow} g_k \beta_{k,H} Z_H^{(k)}(t), \quad t\geq 0,
\end{equation}
where $g_k$ is the first non-zero coefficient in \eqref{e:g_k} and
\begin{equation}\label{e:beta_kH}
  \beta_{k,H} = \bigg( \frac{k!}{H(2H-1)} \bigg)^{1/2}.
\end{equation}
In \eqref{e:noncentral_ffd}, the self-similarity parameter is given by
$$
H = k\bigg(d - \frac{1}{2}\bigg) + 1 \in \bigg(\frac{1}{2},1\bigg)
$$
and $\{Z_H^{(k)}(t)\}_{t\in \bbR}$ is the Hermite process defined by \eqref{e:hermiteprocess}.
\end{theorem}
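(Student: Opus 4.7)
\textbf{Proof proposal for Theorem \ref{thm:noncentral}.}

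The plan is to reduce the convergence in \eqref{e:noncentral_ffd} to an $L^2(\bbR^k)$ convergence of deterministic kernels inside multiple Wiener-It\^o integrals, exploiting the It\^o isometry between Hermite chaos and off-diagonal $k$-fold integrals against $\widetilde B$. Writing the Hermite expansion $G(x) = \sum_{m \geq k} g_m H_m(x)$ with $g_k \neq 0$, I would first decompose
\begin{equation*}
\sum_{n=1}^{[Nt]} \bigl(G(X_n) - \bbE G(X_n)\bigr) = g_k \sum_{n=1}^{[Nt]} H_k(X_n) + \sum_{m > k} g_m \sum_{n=1}^{[Nt]} H_m(X_n).
\end{equation*}
By Proposition \ref{prop:g1g2cov}, the variance of each $m$-th block equals $m! \sum_{|j| < [Nt]}([Nt] - |j|) \gamma_X(j)^m$. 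Under \eqref{e:gamma_X}, $\gamma_X(j)^m \asymp L(j)^m j^{m(2d-1)}$, so a direct calculation shows that for every $m > k$ the resulting growth rate is strictly slower than the calibrated rate $L(N)^k N^{2 + 2k(d-1/2)}$; this uses $d-1/2 < 0$ (giving $m(d-1/2) < k(d-1/2)$ in the nonsummable regime), together with the defining condition $d > (1-1/k)/2$ (handling the summable case $m(1-2d) > 1$, in which the variance is $O(N)$ and is dominated by $N^{2k(d-1/2)+1}$ precisely because of this lower bound on $d$). Hence, after normalization, only the $m=k$ term contributes.

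For this leading term, I would invoke the harmonizable representation $X_n = \int_\bbR e^{\imag n x}\, \widehat f(x)\, \widetilde B(dx)$, where $|\widehat f|^2$ is proportional to the spectral density of $X$. Assumption \eqref{e:gamma_X} combined with a Tauberian theorem for slowly varying sequences forces $|\widehat f(x)|^2 \sim c\, L(1/|x|)\, |x|^{-2d}$ as $x \to 0$. The chaos identity $H_k(X_n) = \widehat I_k\bigl( e^{\imag n(x_1 + \cdots + x_k)} \widehat f(x_1) \cdots \widehat f(x_k)\bigr)$ and the linearity of $\widehat I_k$ express $\sum_{n=1}^{[Nt]} H_k(X_n)$ as a single $k$-fold Wiener-It\^o integral. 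The substitution $x_j \mapsto x_j/N$ inside $\widehat I_k$ uses the Gaussian scaling $\widetilde B(N^{-1}A) \overset{d}{=} N^{-1/2}\widetilde B(A)$ to pull out the prefactor $L(N)^{k/2} N^{k(d-1/2)+1}$, reducing the problem to proving that the remaining kernel converges in $L^2(\bbR^k)$ to $\beta_{k,H} f_{H,k,t}$ with $H = k(d-1/2)+1$. Here the Riemann-sum identity $\frac{1}{N} \sum_{n=1}^{[Nt]} e^{\imag (n/N) u} \to (e^{\imag t u} - 1)/(\imag u)$ supplies the time-dependent factor in \eqref{e:f_Hkt}, the spectral asymptotics of $\widehat f$ supplies the product $\prod_j |x_j|^{(1-H)/k - 1/2}$ via the identity $(1-H)/k - 1/2 = -d$, and the overall multiplicative constant $\beta_{k,H}/b_{k,H}$ emerges from carefully tracking $c$ and the definition \eqref{e:beta_kH}.

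Once $L^2(\bbR^k)$ convergence of kernels is established, the It\^o isometry upgrades it to $L^2(\Omega)$ convergence of the integrals, hence to convergence in distribution for each fixed $t$. Finite-dimensional convergence then follows by the Cram\'er-Wold device: for any $p\in\bbN$, $a_1,\ldots,a_p\in\bbR$ and $t_1,\ldots,t_p \geq 0$, the linear combination $\sum_\ell a_\ell \widehat I_k(\cdot)$ is itself a single $k$-fold Wiener-It\^o integral whose kernel inherits the $L^2(\bbR^k)$ convergence. The main obstacle I foresee is the careful justification of this $L^2(\bbR^k)$ convergence in the presence of the slowly varying factor $L$. Potter-type uniform bounds for slowly varying functions are required in order to pass to the limit by dominated convergence on all of $\bbR^k$, and one must verify that the candidate limit $f_{H,k,t}$ is square-integrable off the diagonals $x_i = \pm x_j$, which is precisely where the parametric restriction $d \in ((1-1/k)/2, 1/2)$ enters: it ensures both local integrability near each coordinate hyperplane $x_j = 0$ and integrability of the Dirichlet-type concentration along the hyperplane $x_1 + \cdots + x_k = 0$.
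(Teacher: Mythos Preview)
The paper does not prove Theorem \ref{thm:noncentral}. It is listed in Section \ref{s:central_non-central} as a background result quoted verbatim from the literature: the paper states explicitly that Theorems \ref{thm:noncentral}, \ref{thm:central}, \ref{thm:central_multi} and \ref{thm:noncentral_multi} correspond to Theorems 5.3.1, 5.4.1, 5.7.1 and 5.7.4 in \cite{pipiras:taqqu:2017}. So there is no ``paper's own proof'' to compare against; the authors simply cite the result.

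That said, your sketch is essentially the classical Dobrushin--Major/Taqqu argument, which is indeed how the cited reference proves the theorem. The reduction to the leading Hermite term via variance comparison, the spectral representation $X_n = \int e^{\imag n x}\widehat f(x)\,\widetilde B(dx)$, the chaos identity $H_k(X_n) = \widehat I_k(e^{\imag n(x_1+\cdots+x_k)}\prod_j\widehat f(x_j))$, the rescaling $x_j\mapsto x_j/N$, and the passage to $L^2(\bbR^k)$ convergence of kernels are all correct and standard. Your identification of the Potter bounds as the technical crux for dominated convergence is also accurate. One small caveat: the scaling step is a distributional equality for the whole random measure, not a pathwise change of variables, so the argument yields convergence in distribution (not in $L^2(\Omega)$) at the process level; the $L^2(\Omega)$ convergence you mention holds only after one has already passed to a copy on the rescaled measure. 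This does not affect the f.d.d.\ conclusion.
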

The following result shows that, in some cases, the limit of partial sum processes as in \eqref{e:V_N(t)} (for $R = 1$) is the usual Brownian motion.
\begin{theorem}\label{thm:central}
  Let $\{X_n\}_{n \in \bbN}$ be a centered stationary Gaussian sequence with autocovariance function $\gamma_X$ and such that $\bbE X^2_1 = 1$. Let $G$ be a function with Hermite rank $k\geq 1$ in the sense of Definition \ref{def:hermiterank}. If
\begin{equation}\label{e:sum_gammax_conv}
  \sum_{\ell=1}^{\infty} \abs{\gamma_X(\ell)}^k < \infty,
\end{equation}
  then
  \begin{equation}\label{e:brownian_limit}
    \frac{1}{N^{1/2}} \sum_{n=1}^{[Nt]} \big( G(X_n) - \bbE G(X_n) \big) \overset{\textnormal{f.d.d.}}{\rightarrow} \sigma B(t), \quad t\geq 0,
  \end{equation}
  where $\{B(t)\}_{t\geq 0}$ is a standard Brownian motion and
  $$
  \sigma^2 = \sum_{m=k}^{\infty} g_m^2 m! \sum_{\ell=-\infty}^{\infty} (\gamma_X(\ell))^m.
  $$
In particular, if
$$
d\in \bigg(0, \frac{1}{2}\bigg(1 - \frac{1}{k}\bigg)\bigg)
$$
in expression \eqref{e:gamma_X} for the autocovariance function  $\gamma_X(\cdot)$ of $X$, then the convergence (\ref{e:brownian_limit}) to a Brownian motion holds.
\end{theorem}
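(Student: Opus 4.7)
The plan is to expand $G$ in the basis of Hermite polynomials, exploit orthogonality to reduce the covariance structure of the partial sums to Brownian-motion form, and then handle the infinite Hermite series by a truncation-plus-approximation scheme. Writing
$$
G(x) - \bbE G(X_1) = \sum_{m \geq k} g_m H_m(x)
$$
in $L^2(\bbR,\phi)$ and setting $S_N(t) := N^{-1/2} \sum_{n=1}^{[Nt]}(G(X_n) - \bbE G(X_n))$, the first step is a covariance computation. Proposition \ref{prop:g1g2cov}, applied to the Gaussian pair $(X_{n_1}, X_{n_2})$ with $G_1 = H_m$ and $G_2 = H_{m'}$, yields the orthogonality relation $\bbE[H_m(X_{n_1}) H_{m'}(X_{n_2})] = \delta_{m,m'}\, m!\, \gamma_X(n_1 - n_2)^m$. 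Since $|\gamma_X(\ell)| \leq \gamma_X(0) = 1$, we have $|\gamma_X(\ell)|^m \leq |\gamma_X(\ell)|^k$ for $m \geq k$, so the summability hypothesis together with dominated convergence delivers
$$
\bbE[S_N(s) S_N(t)] \longrightarrow \min(s,t) \sum_{m \geq k} g_m^2\, m! \sum_{\ell \in \bbZ} \gamma_X(\ell)^m = \sigma^2 \min(s,t),
$$
which matches the covariance of $\sigma B$.

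Next, for $M \geq k$ truncate as $G = G^{\leq M} + G^{>M}$, where $G^{\leq M} := \sum_{k \leq m \leq M} g_m H_m$, and denote by $S_N^{\leq M}$ and $S_N^{>M}$ the corresponding partial-sum processes. The same orthogonality calculation applied to $G^{>M}$, together with the bound $\sum_\ell |\gamma_X(\ell)|^m \leq \sum_\ell |\gamma_X(\ell)|^k$ for $m \geq k$, yields
$$
\sup_{N \in \bbN} \Var S_N^{>M}(t) \leq t\, \bigg(\sum_{m > M} g_m^2\, m!\bigg) \sum_{\ell \in \bbZ} |\gamma_X(\ell)|^k \xrightarrow{M \to \infty} 0,
$$
because $\sum_{m \geq k} g_m^2\, m! = \Var G(X_1) < \infty$. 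By a standard approximation lemma for weak convergence, it therefore suffices to prove that, for each fixed $M$, $S_N^{\leq M} \overset{\textnormal{f.d.d.}}{\rightarrow} \sigma^{\leq M} B$, with the constant $\sigma^{\leq M}$ converging to $\sigma$ as $M \to \infty$.

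For each such $M$, $S_N^{\leq M}(t)$ is a finite linear combination of the Hermite-chaos partial sums $T_N^{(m)}(t) := N^{-1/2} \sum_{n=1}^{[Nt]} H_m(X_n)$ with $k \leq m \leq M$. Joint asymptotic normality of the finite-dimensional marginals of $\{T_N^{(m)}(t)\}$ is the classical Breuer--Major theorem, and together with the Cram\'er--Wold device and the covariance computation above (applied to disjoint time intervals to produce independence of the Brownian increments in the limit) it gives the desired $\overset{\textnormal{f.d.d.}}{\rightarrow} \sigma B$ convergence. The ``in particular'' clause is then immediate: if $d \in (0, (1-1/k)/2)$, then $k(2d - 1) < -1$, so Karamata's theorem on slowly varying functions gives $\sum_\ell |\gamma_X(\ell)|^k < \infty$, and the main statement applies.

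The main obstacle is the CLT for the individual chaos sums $T_N^{(m)}(t)$, i.e., the Breuer--Major theorem itself. It can be proved either by the method of cumulants -- expanding the joint cumulants of order $\geq 3$ via Wick's theorem into sums over connected diagrams and showing each such diagram is bounded by a vanishing multiple convolution of $\gamma_X$ under the summability hypothesis -- or more concisely via the fourth-moment theorem for elements of a fixed Wiener chaos. The remainder of the argument is essentially bookkeeping through Hermite orthogonality and the truncation scheme above.
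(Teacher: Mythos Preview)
The paper does not prove this theorem. It is stated in the appendix (Section~\ref{s:central_non-central}) purely as background, with the explicit attribution that Theorems~\ref{thm:noncentral}, \ref{thm:central}, \ref{thm:central_multi} and \ref{thm:noncentral_multi} ``correspond to \ldots\ Theorems 5.3.1, 5.4.1, 5.7.1 and 5.7.4, respectively, in \cite{pipiras:taqqu:2017}.'' There is therefore nothing in the paper to compare your argument against.

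Your sketch is a reasonable outline of the standard proof one finds in that reference (and in the original Breuer--Major paper): Hermite expansion, truncation to a finite chaos sum, uniform control of the remainder via $\sum_m g_m^2 m! < \infty$ and the summability hypothesis, and then a CLT for the truncated piece. One cautionary remark: when you write that the asymptotic normality of $T_N^{(m)}$ ``is the classical Breuer--Major theorem,'' this is essentially circular, since Theorem~\ref{thm:central} \emph{is} the Breuer--Major theorem. You do immediately rescue this by pointing to the cumulant/diagram method or the Nualart--Peccati fourth-moment theorem as the actual mechanism, and that is indeed where the real work lies; just be aware that invoking the name ``Breuer--Major'' at that step is not an argument. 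The Karamata justification of the ``in particular'' clause is correct.
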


\begin{example}
If $k=1$ in \eqref{e:sum_gammax_conv}, then the absolute summability of the autocovariance function leads to an ordinary Brownian limit. If $k=2$, on the other hand, this limit only emerges when $d<\frac{1}{4}$.
\end{example}

We now turn to multivariate limit theorems. Consider the vector-valued random process \eqref{e:V_N(t)}. The following theorem provides sufficient conditions for the process \eqref{e:V_N(t)} to converge, as $N\rightarrow +\infty$, to a multivariate Gaussian process with dependent Brownian motion marginals.
\begin{theorem}\label{thm:central_multi}
  Let $\{X_n\}_{n \in \bbN}$ be a centered stationary Gaussian sequence with autocovariance function $\gamma_X$ and such that $\bbE X^2_1 = 1$. Let $G_r, r = 1,\hdots,R,$ be deterministic functions with respective Hermite ranks $k_r\geq 1$ , $r=1,\hdots,R$. If
  $$
  \sum_{n=1}^{\infty} \abs{\gamma_X(n)}^{k_r} < \infty,\quad r=1,\hdots,R,
  $$
  then
  $$
  {\mathbf V}_N(t) \overset{\textnormal{f.d.d.}}{\rightarrow} {\mathbf V} (t),\quad t\geq 0,
  $$
where ${\mathbf V}_N(t)$ is given by \eqref{e:V_N(t)} with $A_r(N) = N^{1/2},r=1,\hdots,R.$ The limit process can be expressed as
\begin{equation}\label{e:v(t)_multicentral}
{\mathbf V}(t) = \big( \sigma_1 B_1(t),\hdots,\sigma_R B_R(t) \big)^{\top},
\end{equation}
where
  $$
  \sigma_r^2 = \sum_{m=k_r}^{\infty} g_{r,m}^2 m! \sum_{\ell=-\infty}^{\infty} (\gamma_X(\ell))^m,\quad r=1,\hdots,R,
  $$
and $g_{r,m}$, $r=1,\hdots,R,$ are the coefficients in the Hermite expansion \eqref{e:g_k} of $G_r$. In \eqref{e:v(t)_multicentral}, $\{B_r(t)\}_{t\in\bbR} , r=1,\hdots,R,$ are standard Brownian motions with cross-covariance
  $$
  \bbE B_{r_1}(t_1) B_{r_2}(t_2) = (t_1 \wedge t_2) \frac{\sigma_{r_1,r_2}}{\sigma_{r_1} \sigma_{r_2}}, \quad t_1,t_2\geq 0,
  $$
  and
  $$
  \sigma_{r_1,r_2} = \sum_{m=k_{r_1} \vee k_{r_2}}^{\infty} g_{r_1,m} g_{r_2,m} m! \sum_{n= - \infty}^{\infty} \gamma_X(n)^m
  $$
($a \wedge b = \min\{a,b\}$ and $a \vee b = \max\{a,b\}$).
\end{theorem}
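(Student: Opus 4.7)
The plan is to invoke the Cram\'er-Wold device, reducing the multivariate finite-dimensional convergence to scalar claims that can be addressed by Theorem \ref{thm:central}, after first identifying the limiting covariance structure via Proposition \ref{prop:g1g2cov}.

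For the covariance calculation I would apply Proposition \ref{prop:g1g2cov} to the Gaussian pair $(X_{n_1},X_{n_2})$ of correlation $\gamma_X(n_1-n_2)$, obtaining
\[
\cov\bigl(V_{N,r_1}(t_1),V_{N,r_2}(t_2)\bigr)=\frac{1}{N}\sum_{n_1=1}^{[Nt_1]}\sum_{n_2=1}^{[Nt_2]}\sum_{m\ge k_{r_1}\vee k_{r_2}}g_{r_1,m}g_{r_2,m}\,m!\,\gamma_X(n_1-n_2)^m.
\]
Changing variables $\ell=n_1-n_2$, the lattice-point count for each $\ell$, divided by $N$, tends pointwise to $t_1\wedge t_2$ and is bounded by $1$; since $|\gamma_X|\le 1$ gives $|\gamma_X(\ell)|^m\le|\gamma_X(\ell)|^{k_{r_1}\vee k_{r_2}}$, and this sequence is summable by hypothesis, dominated convergence delivers the claimed limit $(t_1\wedge t_2)\,\sigma_{r_1,r_2}$. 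The same reasoning applied to partial sums indexed by \emph{disjoint} ranges shows that such cross-covariances are $o(1)$ after $\sqrt{N}$ normalization.

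For the Cram\'er-Wold step, fix $0=t_0<t_1<\cdots<t_I$ and scalars $\lambda_{r,i}$. Setting $\mu_{r,j}=\sum_{i\ge j}\lambda_{r,i}$ and $G^{(j)}=\sum_{r}\mu_{r,j}G_r\in L^2(\bbR,\phi)$, the linear combination decomposes as
\[
W_N:=\sum_{r,i}\lambda_{r,i}V_{N,r}(t_i)=\sum_{j=1}^{I}W_N^{(j)},\qquad W_N^{(j)}:=\frac{1}{\sqrt{N}}\sum_{n=[Nt_{j-1}]+1}^{[Nt_j]}\bigl(G^{(j)}(X_n)-\bbE G^{(j)}(X_n)\bigr).
\]
By stationarity of $\{X_n\}$, each $W_N^{(j)}$ has the same law as a partial sum of length $[Nt_j]-[Nt_{j-1}]\sim N(t_j-t_{j-1})$ of the function $G^{(j)}$, which has Hermite rank $\ge\min_{r}k_r$ and satisfies the summability hypothesis of Theorem \ref{thm:central}; the theorem therefore yields $W_N^{(j)}\overset{d}{\rightarrow}\mathcal{N}\bigl(0,(t_j-t_{j-1})\sigma_{G^{(j)}}^2\bigr)$, where $\sigma_{G^{(j)}}^2$ is the asymptotic variance produced by Theorem \ref{thm:central} for $G^{(j)}$. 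Provided that joint Gaussianity of $(W_N^{(1)},\dots,W_N^{(I)})$ holds, the vanishing cross-covariances from the previous paragraph force the limit components to be independent, and $W_N$ is asymptotically centered Gaussian with variance that, via Proposition \ref{prop:g1g2cov}, matches $\Var\bigl(\sum_{r,i}\lambda_{r,i}\sigma_r B_r(t_i)\bigr)$, closing the Cram\'er-Wold loop.

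The main obstacle is the joint Gaussianity of the block vector $(W_N^{(1)},\dots,W_N^{(I)})$: since the weights $\mu_{r,j}$ change with $j$, the concatenated sum is nonstationary in its indexing and Theorem \ref{thm:central} does not apply to $W_N$ verbatim. I would address this via one of two standard routes: $(a)$ a blocking argument inserting small separating gaps between the time intervals $([Nt_{j-1}],[Nt_j]]$, using $\sum_\ell|\gamma_X(\ell)|^{k_r}<\infty$ to bound the negligible gap contributions and to show asymptotic independence of the (now well-separated) block sums, which then convert to a joint Gaussian by Theorem \ref{thm:central} applied block-by-block; or $(b)$ a diagram-formula cumulant computation after truncating each $G^{(j)}$ at Hermite level $M$, showing that all joint cumulants of order $\ge 3$ of the truncated block vector tend to zero, and then letting $M\to\infty$ under a uniform variance bound to control the tail. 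Either route closes the gap and delivers the stated multivariate limit.
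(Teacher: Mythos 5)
First, a point of context: the paper does not prove this statement at all --- it is quoted, together with its companions, directly from \cite{pipiras:taqqu:2017} (Theorem 5.7.1; see the opening paragraph of Section \ref{s:central_non-central}), so there is no in-paper proof to compare yours against. Judged on its own terms, your proposal gets the reduction and the second-order structure right: the Cram\'er--Wold device, the application of Proposition \ref{prop:g1g2cov} to $(X_{n_1},X_{n_2})$, the change of variables $\ell=n_1-n_2$ with the lattice count $\to t_1\wedge t_2$, and the domination $|\gamma_X(\ell)|^m\le|\gamma_X(\ell)|^{k_{r_1}\vee k_{r_2}}\le|\gamma_X(\ell)|^{k_{r_1}\wedge k_{r_2}}$-type bounds combined with $\sum_m|g_{r_1,m}g_{r_2,m}|m!<\infty$ (Cauchy--Schwarz) correctly deliver the limit $(t_1\wedge t_2)\,\sigma_{r_1,r_2}$ and the vanishing of cross-covariances over disjoint time blocks. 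The block decomposition $W_N=\sum_j W_N^{(j)}$ with $G^{(j)}=\sum_r\mu_{r,j}G_r$ is also correct, and each $G^{(j)}$ indeed has Hermite rank at least $\min_r k_r$, so Theorem \ref{thm:central} applies to each block separately.

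The genuine gap is the one you yourself flag: the asymptotic \emph{joint} Gaussianity of $(W_N^{(1)},\dots,W_N^{(I)})$, which is the entire content of the theorem beyond the covariance limit, is deferred to ``one of two standard routes,'' and the first of these does not work under the stated hypotheses. When some $k_r\ge 2$, the condition $\sum_n|\gamma_X(n)|^{k_r}<\infty$ is compatible with $\sum_n|\gamma_X(n)|=\infty$ (e.g.\ $\gamma_X(n)\sim n^{2d-1}$ with $0<d<1/4$ and $k_r=2$), the sequence carries no quantitative mixing rate, and inserting gaps between blocks yields only asymptotic \emph{uncorrelatedness} of the block sums; uncorrelated plus marginally Gaussian limits does not give a jointly Gaussian limit --- that implication is precisely what must be proven. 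Route $(b)$ (truncate each $G^{(j)}$ at Hermite level $M$, kill all joint cumulants of order $\ge 3$ via the diagram formula, then remove the truncation with a uniform-in-$N$ variance bound) is the correct and standard argument --- it is essentially the multivariate Breuer--Major theorem, equivalently the Peccati--Tudor multivariate fourth-moment theorem applied chaos by chaos --- but as written it is a one-sentence placeholder for the decisive step. So your proposal is a sound plan with the routine parts carried out and the hard part named but not executed.
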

The next result concerns the general case where the resulting limit law for \eqref{e:V_N(t)} is a multivariate process with dependent Hermite processes as marginals. The weak limit involves stochastic integrals of the form \eqref{e:widehatI_k(f)}.
\begin{theorem}\label{thm:noncentral_multi}
  Let $\{X_n\}_{n \in \bbN}$ be a centered stationary Gaussian sequence with autocovariance function $\gamma_X$ as in \eqref{e:gamma_X} and such that $\bbE X^2_1 = 1$. Let $G_r,r=1,\hdots,R,$ be deterministic functions with respective Hermite ranks $k_r \geq 1$, $r=1,\hdots,R$. For $d$ as in \eqref{e:gamma_X}, suppose
  $$
  d\in \bigg(\frac{1}{2}\bigg(1-\frac{1}{k_r}\bigg), \frac{1}{2}\bigg),\quad r=1,\hdots,R.
  $$
Consider the process ${\mathbf V}_N(t)$ given by \eqref{e:V_N(t)} with
$$
A_r(N) = (L(N))^{k_r/2} N^{k_r (d - 1/2) + 1}, \quad r = 1,\hdots,R.
$$
Then,
  \begin{equation}\label{e:VN(t)->V^(d)(t)}
\bbR^{R} \ni  {\mathbf V}_N(t) \overset{\textnormal{f.d.d.}}{\rightarrow} {\mathbf V}^{(d)} (t), \quad t\geq 0.
  \end{equation}
In \eqref{e:VN(t)->V^(d)(t)}, the limit process can be represented as
$$
{\mathbf V}^{(d)} (t) \overset{\textnormal{f.d.d.}}{=} \Big\{g_{r,k_r} \beta_{k_r,H_r} \widehat{I}_{k_r}(f_{H_r,k_r,t})\Big\}_{r=1,\hdots,R},
$$
where
  $$
  H_r = k_r\bigg(d - \frac{1}{2}\bigg) + 1 \in \bigg(\frac{1}{2},1\bigg),\quad r=1,\hdots,R,
  $$
$g_{r,k_r}$ is the first non-zero coefficient in the Hermite expansion of $G_r$ in \eqref{e:g_k}, $\beta_{k,H}$ is the constant given in \eqref{e:beta_kH}, and $f_{H,k,t}$ is the kernel function defined by \eqref{e:f_Hkt}.
\end{theorem}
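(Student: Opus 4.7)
The plan is to reduce the problem to joint convergence of multiple Wiener--It\^o integrals against a common complex Gaussian random measure, so that joint convergence in distribution follows from $L^2$-convergence of the integrating kernels. First, I would perform a Hermite expansion: writing $G_r(x) = \sum_{m \ge k_r} g_{r,m} H_m(x)$, I would split the normalized partial sum into the leading Hermite-rank term $g_{r,k_r} A_r(N)^{-1} \sum_{n=1}^{[Nt]} H_{k_r}(X_n)$ plus a tail over $m > k_r$. Using Proposition \ref{prop:g1g2cov} to evaluate variances term by term, together with the assumption $d \in (\tfrac{1}{2}(1-1/k_r),\tfrac{1}{2})$, each higher-$m$ contribution has a strictly smaller normalizing rate than $A_r(N)$ (either $\sum |\gamma_X(\ell)|^m < \infty$ producing an $O(\sqrt{N})$ rate, or still long-range but at rate $L(N)^{m/2} N^{m(d-1/2)+1} \ll A_r(N)$), hence vanishes in $L^2$.

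Second, using the spectral measure $f_X$ of $\{X_n\}$, which has the singularity $f_X(\lambda) \sim c\, L(1/|\lambda|)|\lambda|^{-2d}$ at the origin by \eqref{e:gamma_X}, I represent
\begin{equation*}
\sum_{n=1}^{[Nt]} H_{k_r}(X_n) \overset{d}{=} \widehat I_{k_r}(K_{N,r,t}), \qquad K_{N,r,t}(\lambda) = \frac{e^{\imag [Nt]\lambda_\Sigma}-1}{e^{\imag \lambda_\Sigma}-1}\prod_{j=1}^{k_r}\sqrt{f_X(\lambda_j)},
\end{equation*}
with $\lambda_\Sigma = \lambda_1+\cdots+\lambda_{k_r}$. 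After the change of variables $\lambda_j \mapsto x_j/N$, the prefactor $1/A_r(N) = L(N)^{-k_r/2} N^{-k_r(d-1/2)-1}$ absorbs the $f_X$-scaling via the Potter bounds for $L$, and the Dirichlet numerator tends to $(e^{\imag t(x_1+\cdots+x_{k_r})}-1)/(\imag(x_1+\cdots+x_{k_r}))$. Exhibiting a fixed $L^2(\bbR^{k_r})$-majorant and invoking dominated convergence yields
\begin{equation*}
A_r(N)^{-1} K_{N,r,t}(\cdot/N) \longrightarrow \beta_{k_r,H_r}\, f_{H_r,k_r,t}(\cdot) \quad \text{in } L^2(\bbR^{k_r}),\qquad H_r = k_r(d-1/2)+1,
\end{equation*}
where the constant $\beta_{k_r,H_r}$ arises from reconciling $b_{k_r,H_r}$ in Definition \ref{def:Z_H(t)} with the slow-variation normalization.

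For joint convergence, I note that every coordinate integral $\widehat I_{k_r}(\cdot)$ is built from the same Gaussian random measure $\widetilde B$, so the continuity of the chaos construction in $L^2$ implies that joint convergence of the kernel family $\{A_r(N)^{-1} K_{N,r,t_j}(\cdot/N)\}_{r,j}$ in their respective $L^2(\bbR^{k_r})$ spaces upgrades to joint convergence in distribution of the vector of chaos elements. Combined with Step 1, this gives the claimed f.d.d.\ limit, each coordinate equal to $g_{r,k_r}\beta_{k_r,H_r}\widehat I_{k_r}(f_{H_r,k_r,t})$. The main obstacle is Step 3: upgrading pointwise convergence of the rescaled kernels to $L^2$-convergence, which demands a uniform integrable dominating function that simultaneously tames the $|\lambda_\Sigma|^{-1}$-type behavior of the Dirichlet factor near zero, the product singularity of $f_X$, and the slowly varying $L$ after rescaling. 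The condition $d > \tfrac{1}{2}(1-1/k_r)$ is precisely what makes the joint singularity at the origin square-integrable on $\bbR^{k_r}$ and places $f_{H_r,k_r,t}$ in $L^2(\bbR^{k_r})$, so Fatou-type obstructions do not arise.
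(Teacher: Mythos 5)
The paper does not prove this statement at all: Theorem \ref{thm:noncentral_multi} is imported verbatim as background, identified in Section \ref{s:central_non-central} with Theorem 5.7.4 of \cite{pipiras:taqqu:2017}, so there is no in-paper proof to compare against. Your outline is, in substance, the standard Dobrushin--Major/Taqqu spectral-domain proof that underlies that reference \cite{dobrushin:major:1979,taqqu:1979}: reduction to the leading Hermite term via the rate comparison forced by $d>\tfrac{1}{2}(1-\tfrac{1}{k_r})$, representation of $\sum_n H_{k_r}(X_n)$ as a multiple Wiener--It\^o integral of a Dirichlet-type kernel against the spectral random measure, rescaling $\lambda\mapsto x/N$, $L^2$-convergence of kernels, and the observation that joint convergence is automatic because every coordinate lives in a chaos over the \emph{same} measure $\widetilde{B}$ -- this last point being exactly how the multivariate statement is deduced from the univariate one. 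Two caveats on the sketch: (a) passing from the covariance asymptotics \eqref{e:gamma_X} to $f_X(\lambda)\sim c\,L(1/|\lambda|)|\lambda|^{-2d}$ near the origin is an Abelian-type step that needs justification (it is where the regular-variation hypothesis is actually consumed); and (b) the $L^2$-convergence of the rescaled kernels is not obtained in the literature by exhibiting a single dominating function, but by splitting the integration domain and estimating the pieces separately -- you correctly flag this as the main obstacle, but the ``fixed $L^2$-majorant plus dominated convergence'' phrasing understates the work there. As a proof sketch the proposal is sound and matches the cited source's strategy.
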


\section{Proofs}\label{app:mixingproofs}

The following two lemmas are mentioned in the Introduction. They establish that the assumptions of Theorem \ref{t:E(n)dist} hold for fGn and the fOU process.
\begin{lemma}\label{lem:gammaZ}
Let $Y$ be a fGn or a fOU process. For $n \in \bbN$, let $\{\gamma_{\widetilde{Z}_n}(k)\}_{k\in\bbZ}$ be the autocovariance function of the associated sequence $\{\widetilde{Z}_n(k)\}_{k\in \bbZ}$ as in \eqref{e:defZn}. Then,
\begin{equation}\label{e:sum_gamma_Z(n)}
  \sum_{k=1}^{\infty} |\gamma_{\widetilde{Z}_n}(k)| < \infty, \quad \sum_{k=1}^{\infty} |\gamma_{\widetilde{Z}_n}(k)|^2 < \infty.
\end{equation}
\end{lemma}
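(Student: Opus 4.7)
My plan is to reduce both summability claims to a single decay estimate, namely $\gamma_{\widetilde{Z}_n}(k) = O(k^{\alpha - 4})$ as $k \rightarrow \infty$ with $n$ fixed. The starting point is the bilinearity identity obtained directly from \eqref{e:defZn}:
\begin{equation*}
\gamma_{\widetilde{Z}_n}(k) = \sigma(n)^2 \bigl[\hspace{0.5mm} 2\gamma_Y(k) - \gamma_Y(k+n) - \gamma_Y(k-n) \hspace{0.5mm}\bigr], \quad k \geq n.
\end{equation*}
Thus, up to the constant $\sigma(n)^2$, the sequence $\gamma_{\widetilde{Z}_n}$ is the image of $\gamma_Y$ under the discrete second-difference operator $\Delta_n^2 f(k) := 2 f(k) - f(k+n) - f(k-n)$, and the issue reduces to showing that $\Delta_n^2 \gamma_Y(k)$ decays at least like $k^{\alpha - 4}$.

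Next, I would establish in both models the refined two-term asymptotic $\gamma_Y(k) = c_\alpha k^{\alpha - 2} + O(k^{\alpha - 4})$. For fGn this follows from a fourth-order Taylor expansion of $|1 \pm 1/k|^\alpha$ inside the closed form given in Example \ref{ex:fGn}, which merely sharpens the second-order expansion already invoked to derive \eqref{e:L_2(k)fGn}. For fOU the expansion is already recorded in Example \ref{ex:fOU} and is read off from the integral representation \eqref{e:gamma_y(k)_fOU} of $\gamma_V$. Applying Taylor's theorem with integral remainder,
\begin{equation*}
\Delta_n^2 f(k) = - \int_0^n (n - u)\bigl[f''(k + u) + f''(k - u)\bigr]\, du,
\end{equation*}
to the leading term $c_\alpha k^{\alpha - 2}$, whose second derivative is of order $k^{\alpha - 4}$, produces a contribution of order $n^2 k^{\alpha - 4}$; the $O(k^{\alpha - 4})$ remainder in $\gamma_Y$ inherits the same order under $\Delta_n^2$ by the triangle inequality. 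Combining these yields $\gamma_{\widetilde{Z}_n}(k) = O(k^{\alpha - 4})$ as $k \rightarrow \infty$.

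Since $\alpha \in (0, 2)$ gives $\alpha - 4 < -2$, both $\sum_k k^{\alpha - 4}$ and $\sum_k k^{2(\alpha - 4)}$ converge; after handling the initial finite block $k < 2 n$ separately, this yields \eqref{e:sum_gamma_Z(n)}. The same estimate also immediately delivers the stronger decay condition \eqref{e:gammaZtilde(k)=O(k^(alpha-3))} used in Proposition \ref{prop:E_1(n)} and Theorem \ref{t:E(n)dist}, since $\alpha - 4 < \alpha - 3$. I expect the main obstacle to be remainder control in the fOU case: unlike fGn, where $\gamma_Y$ is algebraically already a second difference of $|k|^\alpha$, so that $\gamma_{\widetilde{Z}_n}$ becomes a fourth-order difference handled by a single explicit Taylor expansion, the fOU covariance is given only as an integral, and producing the $O(s^{\alpha - 4})$ remainder uniformly in $s$ must be extracted by careful integration by parts on \eqref{e:gamma_y(k)_fOU}, keeping constants under control.
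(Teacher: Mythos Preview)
Your approach is essentially the paper's: express $\gamma_{\widetilde{Z}_n}$ as the second difference $\sigma(n)^2\,\Delta_n^2\gamma_Y$, insert the leading-order asymptotic of $\gamma_Y$, and Taylor-expand to obtain a summable decay rate (the paper stops at the coarser $\gamma_{\widetilde{Z}_n}(k)=O(k^{\alpha-3})$, already enough for both sums since $\alpha<2$, whereas your $O(k^{\alpha-4})$ is correct but sharper than needed). One small correction: the fOU obstacle you anticipate does not arise, because \eqref{e:gamma_y(k)_fOU} is not an integral representation but the asymptotic series from Cheridito, Kawaguchi and Maejima (2003), which already supplies the remainder $O(k^{2H-2L-2})$ for any $L\in\bbN$ and hence the $O(k^{\alpha-4})$ term directly---no integration by parts is required.
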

\begin{proof}
First suppose $Y$ is a fGn. We start by developing the autocovariance function of $\{Y(k)\}_{k\in \bbZ}$. By a Taylor expansion of order 2,
 $$
 \gamma_Y(k) = \frac{1 }{2} \abs{k}^{\alpha}\bigg(\abs{1-\frac{1}{k}}^{\alpha} - 2 + \abs{1+\frac{1}{k}}^{\alpha}\bigg)
 $$
 $$
 = \frac{1 }{2} \abs{k}^{\alpha} \bigg( 1 - \alpha k^{-1} + \frac{\alpha(\alpha - 1)}{2} k^{-2}  + O(k^{-3}) - 2 + 1 + \alpha k^{-1} + \frac{\alpha(\alpha - 1)}{2} k^{-2}  + O(k^{-3})\bigg)
 $$
\begin{equation}\label{e:gamma_y(k)}
  = \frac{ \alpha(\alpha - 1) }{2}k^{\alpha - 2} + O(k^{\alpha -3}),
\end{equation}
as $k\rightarrow\infty$. Consider the standardized increments of fGn, $\widetilde{Z}_n(k) = \sigma(n)[Y(n+k) - Y(k)]$. By \eqref{e:gamma_y(k)} and a Taylor expansion of order 1, for $k\in\bbZ$,
 $$
 \gamma_{\widetilde{Z}_n}(k) = \sigma^2(n) \bbE (Y(n+k+j) - Y(k+j))(Y(n+j) - Y(j))
  $$
  $$
  = -\sigma^2(n)[\gamma_Y(k+n) -2\gamma_Y(k) + \gamma_Y(k-n) ]
 $$
 $$
 = -\frac{\sigma^2(n) \alpha(\alpha - 1) }{2}(\abs{k+n}^{\alpha -2} - 2\abs{k}^{\alpha - 2} + \abs{k-n}^{\alpha-2})+ O(k^{\alpha -3})
 $$
 $$
 = \frac{\sigma^2(n) \alpha(\alpha - 1) }{2}\abs{k}^{\alpha - 2}\bigg(2 - \abs{1-\frac{n}{k}}^{\alpha-2} - \abs{1+\frac{n}{k}}^{\alpha -2}\bigg)+ O(k^{\alpha -3})
 $$
 \begin{equation}\label{e:gammaZtilde=O(k^(alpha-3))_fGn}
 = \frac{\sigma^2(n) \alpha(\alpha - 1) }{2}\abs{k}^{\alpha - 2}O(k^{-1}) + O(k^{\alpha -3}) = O(k^{\alpha -3}),
 \end{equation}
as $k\rightarrow\infty$. Therefore, \eqref{e:sum_gamma_Z(n)} holds.

Now suppose $Y$ is a fOU process, $H \neq 1/2$. By Cheridito et al.\ \cite{cheridito:kawaguchi:maejima:2003}, Theorem 2.3, for any $k,L \in \bbN$,
$$
\gamma_Y(k) = \frac{\sigma^2}{2} \sum^{L}_{\ell =1} \lambda^{-2 \ell} \Big( \prod^{2\ell-1}_{q=0}(2H-q)\Big) q^{2H-2\ell} + O(k^{2H-2L-2})
$$
\begin{equation}\label{e:gamma_y(k)_fOU}
= \frac{\sigma^2 2H(2H-1) }{2\lambda^{2}}  k^{2H-2} + O(k^{2H-3}).
\end{equation}
Moreover, for some function $\sigma(n)$, again by a Taylor expansion of order 1,
$$
\gamma_{\widetilde{Z}_n}(k) = - \sigma^2(n) \{ \gamma_Y(k+n) - 2 \gamma_Y(k) + \gamma_Y(k-n)\}
$$
$$
= - \sigma^2(n) \frac{\sigma^22H(2H-1) }{2\lambda^{2}}   \{ |k+n|^{2H-2} - 2 |k|^{2H-2} + |k-n|^{2H-2}
$$
$$
+ O(|k+n|^{2H-3}) + O(|k|^{2H-3}) + O(|k-n|^{2H-3})\}
$$
\begin{equation}\label{e:gammaZtilde=O(k^(alpha-3))_fOU}
= - \sigma^2(n) \frac{\sigma^22H(2H-1) }{2\lambda^{2}}   |k|^{2H - 2} O(k^{-1}) + O(k^{2H-3}).
\end{equation}
Therefore, \eqref{e:sum_gamma_Z(n)} also holds when $H \neq 1/2$. The case where $H = 1/2$ (the traditional OU process) is straightforward.
\end{proof}

\begin{lemma}\label{l:summability_cross-cov}
Let $Y$ be a fGn or a fOU process with parameter $0 < \alpha < 3/2$ ($\alpha = 2H$). Given expression \eqref{e:defZn}, fix $n \in \bbN$ large enough so that
\begin{equation}\label{e:sigma(n)<1}
\sigma(n) < 1.
\end{equation}
Let $\{\gamma_{\widetilde{Z}_n,Y}(k)\}_{k\in\bbZ}$ be the cross-covariance function of $\{\widetilde{Z}_n(k)\}_{k \in \bbZ}$ and $Y$. Then,
\begin{equation}\label{e:summability_cross-cov}
\sum^{\infty}_{\ell = - \infty} \Big| \sum^{\infty}_{m=2} g_{1,n,m}\hspace{0.5mm}g_{\cos,m}\cdot m! \big( \gamma_{\widetilde{Z}_n,Y}(\ell)\big)^m\Big| < \infty.
\end{equation}
\end{lemma}
\begin{proof}
First suppose $Y$ is a fGn. We start off with the range $1 < \alpha < 3/2$. By Taqqu \cite{taqqu:2003}, Proposition 3.1, $(e)$,
\begin{equation}\label{e:gamma_Y>0}
\gamma_Y(k) > 0, \quad k \in \bbZ.
\end{equation}
Moreover, note that the function $f(x) = x^{\alpha-1}$, $x > 0$, is strictly concave. Then, for $k \in [1,\infty)$,
$$
\frac{1}{2} (k+1)^{\alpha-1}+\frac{1}{2} (k-1)^{\alpha-1} < \Big[ \frac{1}{2} (k+1)+\frac{1}{2} (k-1)\Big]^{\alpha-1} = k^{\alpha-1}.
$$
Equivalently, $\frac{\alpha}{2}\{(k+1)^{\alpha-1} - 2 k^{\alpha-1}+ (k-1)^{\alpha-1}\} < 0$. Since, in addition, $\gamma_Y(k) < 1$ for $k \neq 0$,
\begin{equation}\label{e:mono_decreas}
\textnormal{$\gamma_Y$ is monotonic decreasing on $\bbN \cup \{0\}$.}
\end{equation}
So, for $k \in \bbZ$, recast
\begin{equation}\label{e:gamma-cross_reexpressed}
\gamma_{\widetilde{Z}_n,Y}(k) = \sigma(n) \bbE \big[ \big(Y(n+k)-Y(k)\big)Y(0) \big] = \sigma(n)\big[\gamma_{Y}(n+k) - \gamma_{Y}(k)\big].
\end{equation}
By \eqref{e:sigma(n)<1}, \eqref{e:gamma_Y>0} and \eqref{e:mono_decreas},
\begin{equation}\label{e:sigma(n)|Y-Y|_bound}
0 < \sigma(n)\big|\gamma_{Y}(n+k) - \gamma_{Y}(k)\big| <
\left\{\begin{array}{cc}
\gamma_{Y}(k), & \textnormal{if } \lfloor - \frac{n}{2}\rfloor  < k ;\\
\gamma_{Y}(n+k), & \textnormal{if } k \leq \lfloor - \frac{n}{2} \rfloor .
\end{array}\right.
\end{equation}
So, fix $m \in \bbN \backslash \{1\}$. Then, by \eqref{e:gamma-cross_reexpressed} and \eqref{e:sigma(n)|Y-Y|_bound},
$$
\sum^{\infty}_{k = - \infty} \big| \gamma_{\widetilde{Z}_n,Y}(k)\big|^m
=\Big\{\sum^{\infty}_{k = \lfloor - n/2 \rfloor}+ \sum^{\lfloor - n/2 \rfloor -1}_{k=-\infty}  \Big\}\big| \gamma_{\widetilde{Z}_n,Y}(k)\big|^m
$$
$$
\leq \sum^{\infty}_{k = \lfloor - n/2 \rfloor} \big(\gamma_{Y}(k)\big)^m+ \sum^{\lfloor - n/2 \rfloor-1}_{k=-\infty} \big(\gamma_{Y}(n+k)\big)^m
\leq 2 \sum^{\infty}_{k = - \infty} \big(\gamma_{Y}(k)\big)^m
$$
\begin{equation}\label{e:sum_|gamma_Zn,Y|^m<infty}
= 2 \sum^{\infty}_{k = - \infty} \big|\gamma_{Y}(k)\big|^m < \infty, \quad m \in \bbN \backslash \{1\}.
\end{equation}
Recall that, by \eqref{e:sigma3},
\begin{equation}\label{e:sum_g^2_cos.m!_sum_cov-fGn^m<infty}
\sum^{\infty}_{m=2} g^{2}_{\cos,m} \cdot m! \sum^{\infty}_{k = - \infty}\big( \gamma_{Y}(k)\big)^m  = \sigma^2_3 < \infty.
\end{equation}
Moreover, by repeating the proof of Proposition \ref{prop:E_2(n)} with the function $G_{1,n}(x) = \cos(x/\sigma(n))$ in place of $\cos(x)$, we further conclude that
\begin{equation}\label{e:sum_g^2_1nm.m!_sum_cov-fGn^m<infty}
\sum^{\infty}_{m=2} g^{2}_{1,n,m} \cdot m! \sum^{\infty}_{k = - \infty}\big( \gamma_{Y}(k)\big)^m   < \infty.
\end{equation}
In view of \eqref{e:sum_g^2_cos.m!_sum_cov-fGn^m<infty} and \eqref{e:sum_g^2_1nm.m!_sum_cov-fGn^m<infty}, the left-hand side of \eqref{e:summability_cross-cov} is bounded by
\begin{equation}\label{e:LHS_summability_cross-cov_bound}
 \sum^{\infty}_{m=2} \Big(\frac{g^2_{1,n,m}+g^2_{\cos,m}}{2}\Big) \cdot m! \sum^{\infty}_{k = - \infty} \big| \gamma_{\widetilde{Z}_n,Y}(k)\big|^m
\leq  \sum^{\infty}_{m=2} \big(g^2_{1,n,m}+g^2_{\cos,m}\big)\cdot m! \sum^{\infty}_{k = - \infty} \big(\gamma_{Y}(k)\big)^m < \infty.
\end{equation}
This establishes \eqref{e:summability_cross-cov} for $1 < \alpha < 3/2$.

Now rewrite $\gamma_{Y,\alpha} \equiv \gamma_{Y}$ to denote the autocovariance function of a standard fGn with parameter $0 < \alpha < 2$ ($\alpha = 2H$). Recall that, for a nonzero constant $C_{\alpha}$,
\begin{equation}\label{e:autocov_fGn_decay}
\gamma_{Y,\alpha}(k) \sim C_{\alpha} k^{\alpha -2}, \quad k \rightarrow \infty
\end{equation}
(e.g., Taqqu \cite{taqqu:2003}, Proposition 3.1, $(f)$). Then, for large enough $k > 0$,
\begin{equation}\label{e:gamma-cross_bounded_|gamma-Y|}
0 < |\gamma_{\widetilde{Z}_n,Y}(k)| = \sigma(n)\big|\gamma_{Y,\alpha}(n+k) - \gamma_{Y,\alpha}(k)\big|
= \sigma(n)|\gamma_{Y,\alpha}(k)|\Big|\frac{\gamma_{Y,\alpha}(n+k)}{\gamma_{Y,\alpha}(k)} - 1\Big| \leq |\gamma_{Y,\alpha}(k)|.
\end{equation}
So, fix $0 < \alpha < 1$ and $1 < \alpha_* < 3/2$. In view of \eqref{e:autocov_fGn_decay} and \eqref{e:gamma-cross_bounded_|gamma-Y|}, there exists $k_0 \in \bbN$ such that
\begin{equation}\label{e:|cross-cov|_bound_based_on_gamma_Y,alpha*}
0 < |\gamma_{\widetilde{Z}_n,Y}(k)| = \sigma(n)\big|\gamma_{Y,\alpha}(n+k) - \gamma_{Y,\alpha}(k)\big| \leq
\left\{\begin{array}{cc}
\gamma_{Y,\alpha_*}(k), & \textnormal{if } k_0+1 \leq |k| ;\\
1, & \textnormal{if } |k|\leq k_0.\\
\end{array}\right.
\end{equation}
In view of \eqref{e:|cross-cov|_bound_based_on_gamma_Y,alpha*}, the left-hand side of \eqref{e:summability_cross-cov} is bounded by
$$
 \sum^{\infty}_{m=2} \Big(\frac{g^2_{1,n,m}+g^2_{\cos,m}}{2}\Big) \cdot m! \Big\{\sum_{|k|\leq k_0} + \sum_{k_0+1 \leq |k| } \Big\}\big| \gamma_{\widetilde{Z}_n,Y}(k)\big|^m
 $$
 $$
 \leq \sum^{\infty}_{m=2} \Big(\frac{g^2_{1,n,m}+g^2_{\cos,m}}{2}\Big) \cdot m! (2k_0+1)
 +  \sum^{\infty}_{m=2} \Big(\frac{g^2_{1,n,m}+g^2_{\cos,m}}{2}\Big) \cdot m! \sum_{k_0+1 \leq |k| } \big| \gamma_{Y,\alpha_*}(k)\big|^m
 < \infty,
$$
where finiteness is a consequence of \eqref{e:LHS_summability_cross-cov_bound}. This establishes \eqref{e:summability_cross-cov} for $0 < \alpha < 1$.

When $\alpha = 1$, we can reexpress $\sigma(n)|\gamma_Y(n+k)-\gamma_Y(k)| = \sigma(n)\cdot 1_{\{ k \in \{-n,0\}\}}$. Therefore, $\sum^{\infty}_{k = - \infty} \big| \gamma_{\widetilde{Z}_n,Y}(k)\big|^m < 2$. Hence, the bound \eqref{e:LHS_summability_cross-cov_bound} holds, and so does \eqref{e:summability_cross-cov}.

So, for $0 < \alpha < 3/2$, \eqref{e:summability_cross-cov} holds when $Y$ is a fGn. Now suppose $Y$ is a standard fOU process with parameter $\alpha = 2H \neq 1$, and let $\gamma_{Y,\alpha}$ be its autocovariance function. By expression \eqref{e:gamma_y(k)_fOU}, there exist a parameter value $\alpha_* \in (\max\{\alpha,1\},3/2)$, a fGn correlation function $\gamma_{\textnormal{fGn},\alpha_*}$ and $k_0 \in \bbN$ large enough so that
$$
|\gamma_{Y,\alpha}(k)| < \gamma_{\textnormal{fGn},\alpha_*}(k), \quad k_0 \leq 1 + |k|.
$$
Therefore, by relation \eqref{e:sigma(n)<1},
$$
0 < |\gamma_{\widetilde{Z}_n,Y}(k)| = \sigma(n)\big|\gamma_{Y,\alpha}(n+k) - \gamma_{Y,\alpha}(k)\big| \leq
\left\{\begin{array}{cc}
\gamma_{\textnormal{fGn},\alpha_*}(k), & \textnormal{if } k_0+1 \leq |k| ;\\
1, & \textnormal{if } |k|\leq k_0.\\
\end{array}\right.
$$
Thus, analogously to \eqref{e:LHS_summability_cross-cov_bound}, the left-hand side of \eqref{e:summability_cross-cov} is bounded by
$$
\sum^{\infty}_{m=2} \Big(\frac{g^2_{1,n,m}+g^2_{\cos,m}}{2}\Big) \cdot m! (2k_0+1)
 +  \sum^{\infty}_{m=2} \Big(\frac{g^2_{1,n,m}+g^2_{\cos,m}}{2}\Big) \cdot m! \sum_{k_0+1 \leq |k| } \big| \gamma_{\textnormal{fGn},\alpha_*}(k)\big|^m < \infty,
$$
where finiteness is a consequence of \eqref{e:LHS_summability_cross-cov_bound} (for fGn). This establishes \eqref{e:summability_cross-cov} for the case where $Y$ is a fOU process with $\alpha \neq 1$. The case where $\alpha = 1$ (the traditional OU process) is straightforward.
\end{proof}

\begin{remark}
Note that, if $Y$ is a fGn or a fOU process with parameter $0 < \alpha < 2$ ($\alpha = 2H$), condition \eqref{e:sigma(n)<1} is always satisfied for large enough $n$.
\end{remark}

\noindent{\sc Proof of Proposition \ref{prop:E_1(n)}}: Note that $\widehat{E}_{N,1}(n)$ in \eqref{e:E1} can be rewritten as
\begin{equation*}
\bbC \ni  \widehat{E}_{N,1}(n) = \frac{1}{N-n+1} \sum_{k=0}^{N-n} \cos\bigg( \frac{\widetilde{Z}_n(k)}{\sigma(n)} \bigg) + \hspace{0.5mm}\imag \hspace{0.5mm} \frac{1}{N-n+1} \sum_{k=0}^{N-n} \sin\bigg( \frac{\widetilde{Z}_n(k)}{\sigma(n)} \bigg).
\end{equation*}
For $N > n-1$,
$$
\sqrt{N-n+1} \Big(\mathfrak{R}\widehat{E}_{N,1}(n) - \bbE \cos(  Y(n) - Y(0)) \Big)
$$
$$
= \frac{1}{\sqrt{N-n+1}} \sum_{k=0}^{N-n} [\cos(Y(n+k) - Y(k)) - \bbE \cos(  Y(n) - Y(0)) ]
$$
$$
= \frac{1}{\sqrt{N-n+1}} \sum_{k=0}^{N-n} \bigg[ \cos \bigg(\frac{\widetilde{Z}_n(k)}{\sigma(n)}\bigg) - \bbE \cos\bigg(\frac{\widetilde{Z}_n(k)}{\sigma(n)}\bigg) \bigg].
$$
Then, by condition \eqref{e:gammaZtilde(k)=O(k^(alpha-3))} and Theorem \ref{thm:central}, expression \eqref{e:E_1(n)_real} holds. An analogous reasoning further establishes \eqref{e:E_1(n)_imag}. $\Box$\\

\noindent{\sc Proof of Proposition \ref{prop:E_2(n)}}: We prove ($i$) first. By expanding $\widehat{E}_{N,2}$ and using the stationarity of $Y$, we can rewrite the left-hand side of \eqref{e:e_2(n)alp<3/2} as
 $$
\sqrt{N+1} \bigg\{ \abs{ \bbE \cos Y(0)  + \frac{1}{N+1} \sum_{k=0}^{N} [\cos Y(k)  - \bbE \cos Y(k)] }^2
 $$
 $$
 + \abs{\frac{1}{N+1} \sum_{k=0}^{N} \sin Y(k)}^2  - \abs{\bbE \cos Y(0)}^2\bigg\}
$$
$$
 = 2 \hspace{1mm}\bbE \cos Y(0)  \frac{1}{ \sqrt{N+1} } \sum_{k=0}^{N} [\cos Y(k)  - \bbE \cos Y(k) ]
$$
\begin{equation}\label{e:e_2(n)asym1}
+ \frac{1}{ \sqrt{N+1} } \bigg(\frac{1}{ \sqrt{N+1} } \sum_{k=0}^{N} [\cos Y(k) - \bbE \cos Y(k) ] \bigg)^2
  + \bigg( \frac{1}{(N+1)^{3/4}} \sum_{k=0}^{N} \sin Y(k) \bigg)^2.
\end{equation}
We will show that, as $N\rightarrow \infty$, the first term in the sum \eqref{e:e_2(n)asym1} converges to a non-degenerate random variable in distribution, and that the second and third terms in \eqref{e:e_2(n)asym1} converge to zero in probability. Note that, when $0<\alpha < 3/2$, condition \eqref{e:gammaY(k)=O(k^(alpha - 2))} implies that
$$
  \sum_{k=1}^{\infty} \abs{\gamma_Y(k)}^2 < \infty.
$$
By Theorem \ref{thm:central},
\begin{equation}\label{e:e_2(n)cosalp<3/2}
 \frac{1}{\sqrt{N+1}} \sum_{k=0}^{N} [ \cos Y(k) - \bbE \cos Y(k) ] \overset{d}{\rightarrow} \sigma_3 B_3(1), \quad N \rightarrow \infty,
\end{equation}
where
  $$
  \sigma^2_3 = \sum_{m=2}^{\infty} g_{\cos,m}^2 m! \sum_{k=-\infty}^{\infty} (\gamma_Y(k))^m.
  $$
Since $\frac{1}{\sqrt{N+1}} \rightarrow 0$, by \eqref{e:e_2(n)cosalp<3/2} and Slutsky's theorem, the second term in the sum \eqref{e:e_2(n)asym1} converges to zero in probability. As for the third term in the sum \eqref{e:e_2(n)asym1}, when $0<\alpha\leq 1$, condition \eqref{e:gammaY(k)=O(k^(alpha - 2))} implies that
$$
\sum_{k=1}^{\infty} \abs{\gamma_Y(k)} < \infty.
$$
Thus, by Theorem \ref{thm:central},
\begin{equation}\label{e:e_2(n)sinalp<=1}
\frac{1}{\sqrt{N+1}} \sum_{k=0}^{N} \sin Y(k) \overset{d}{\rightarrow} \sigma_4 B_4(1), \quad N \rightarrow \infty,
\end{equation}
where
  $$
  \sigma^2_4 = \sum_{m=1}^{\infty} g_{\sin,m}^2 m! \sum_{k=-\infty}^{\infty} (\gamma_Y(k))^m.
  $$
Then, the third term in the sum \eqref{e:e_2(n)asym1} satisfies
\begin{equation}\label{e:third_term_0<alpha=<1}
\frac{1}{ \sqrt{N+1} } \bigg( \frac{1}{ \sqrt{N+1} } \sum_{k=0}^{N} \sin Y(k) \bigg)^2 \overset{\bbP}{\rightarrow} 0, \quad N\rightarrow\infty,
\end{equation}
which is a consequence of \eqref{e:e_2(n)sinalp<=1} and Slutsky's theorem. On the other hand, suppose $1<\alpha <3/2$. Under condition \eqref{e:gammaY(k)=O(k^(alpha - 2))}, by Theorem \ref{thm:noncentral},
\begin{equation}\label{e:e_2(n)sinalp<3/2}
\frac{1}{\sqrt{L(N+1)} (N+1)^{\alpha/2}} \sum_{k=0}^{N} \sin Y(k)  \overset{d}{\rightarrow} g_1 \beta_{1,H} Z_H^{(1)}(1).
 \end{equation}
Then, the third term in the sum \eqref{e:e_2(n)asym1} satisfies
\begin{equation}\label{e:third_term_1<alpha<3/2}
\frac{L(N+1)}{(N+1)^{3/2-\alpha}}\bigg( \frac{1}{\sqrt{L(N+1)} (N+1)^{\alpha/2}} \sum_{k=0}^{N} \sin Y(k) \bigg)\overset{\bbP}{\rightarrow} 0, \quad N\rightarrow\infty,
\end{equation}
which results from \eqref{e:e_2(n)sinalp<3/2} and Slutsky's theorem. Thus, expression \eqref{e:e_2(n)alp<3/2} follows.

We now prove ($ii$). So, suppose $3/2<\alpha<2$ and rewrite the left-hand side of \eqref{e:e_2(n)alp>3/2} as
 $$
\frac{(N+1)^{2-\alpha}}{ L(N+1)} \bigg\{ \abs{ \bbE \cos Y(0) + \frac{1}{N+1} \sum_{k=0}^{N} [\cos Y(k) - \bbE \cos Y(k)] }^2
 $$
 $$
 + \abs{\frac{1}{N+1} \sum_{k=0}^{N} \sin Y(k) }^2  - \abs{\bbE \cos Y(0) }^2\bigg\}
$$
$$
 = 2 \hspace{0.5mm}\bbE \cos Y(0)  \frac{1}{ L(N+1) (N+1)^{\alpha - 1}} \sum_{k=0}^{N} [\cos Y(k) - \bbE \cos Y(k)]
$$
$$
+ \frac{ L(N+1)}{(N+1)^{2-\alpha}} \bigg(\frac{1}{ L(N+1)(N+1)^{\alpha - 1} } \sum_{k=0}^{N} [\cos Y(k) - \bbE \cos Y(k)] \bigg)^2
$$
\begin{equation}\label{e:e_2(n)asym1alp>3/2}
  + \bigg( \frac{1}{\sqrt{L(N+1)}(N+1)^{\alpha/2}} \sum_{k=0}^{N} \sin Y(k) \bigg)^2.
\end{equation}
Under condition \eqref{e:gammaY(k)=O(k^(alpha - 2))}, by Theorem \ref{thm:noncentral_multi},
$$
\bigg( \frac{1}{L(N+1) (N+1)^{\alpha - 1}} \sum_{k=0}^{N} [\cos Y(k) - \bbE \cos Y(k)],
    \frac{1}{\sqrt{L(N+1)} (N+1)^{\alpha/2}} \sum_{k=0}^{N} \sin Y(k) \bigg)^{\top}
$$
\begin{equation}\label{e:e_2(n)alp>3/2multi}
  \overset{d}{\rightarrow} \Big( g_{1,2}\beta_{2,\alpha - 1} \widehat{I}_{2}(f_{\alpha - 1,2,1}),
     g_{2,1}\beta_{1,\alpha/2} \widehat{I}_{1}(f_{\alpha/2,1,1})\Big)^{\top}.
\end{equation}
By Slutsky's theorem and \eqref{e:e_2(n)alp>3/2multi}, the second term in the sum \eqref{e:e_2(n)asym1alp>3/2} converges to zero in probability. Then, by \eqref{e:e_2(n)alp>3/2multi}, relation \eqref{e:e_2(n)alp>3/2} holds. $\Box$\\

\noindent{\sc Proof of Theorem \ref{t:E(n)dist}}: We prove ($i.1$) first. Note that, when $0<\alpha < 3/2$, we can express
$$
\sqrt{N+1} \big\{\widehat{E}_{N,2} - |\bbE \cos Y(0)|^2 \big\}
= 2 \hspace{0.5mm}\bbE \cos Y(0) \frac{1}{\sqrt{N+1}} \sum^{N}_{k=0} \big[ \cos Y(k) - \bbE \cos Y(k)\big] + o_{\bbP}(1)
$$
(see expressions \eqref{e:e_2(n)asym1}, \eqref{e:e_2(n)cosalp<3/2}, \eqref{e:third_term_0<alpha=<1} and \eqref{e:third_term_1<alpha<3/2}). Thus, by Propositions \ref{prop:E_1(n)} and \ref{prop:E_2(n)} and Slutsky's theorem,
$$
\sqrt{N+1} \Big( \mathfrak{R} \widehat{E}_N(n) - \bbE \cos\big(Y(n) - Y(0)\big) + \abs{\bbE \cos Y(0)}^2 \Big)
$$
$$
= \frac{\sqrt{N+1}}{\sqrt{N-n+1}} \sqrt{N-n+1}\Big(\mathfrak{R} \widehat{E}_{N,1}(n) - \bbE \cos\big(Y(n) - Y(0)\big)\Big)
- \sqrt{N+1} \big[\widehat{E}_{N,2} - \abs{\bbE \cos Y(0)}^2 \big]\Big)
$$
$$
\overset{d}{\rightarrow} \sigma_1 B_1(1) + 2 \big(\bbE \cos Y(0)\big) \sigma_3 B_3(1),
$$
where $(B_1(1),B_3(1))^{\top}$ is a Gaussian vector since $(Y,\widetilde{Z}_n)$ is a Gaussian vector. In addition, let $G_1(x) = \cos(x/\sigma(n)), G_2(x) = \cos(x)$. By Proposition \ref{prop:g1g2cov},
$$
\cov \Big(\cos \Big(\frac{\widetilde{Z}_n(k+j)}{\sigma(n)}\Big), \cos Y(j) \Big)= \sum_{m = 2}^{\infty} g_{1,n,m} \hspace{0.5mm}g_{\cos,m} \hspace{0.5mm} m! (\gamma_{\widetilde{Z}_n,Y}(k))^m,
$$
since $g_{1,n,1}=g_{\cos,1}=0$. Hence, by condition \eqref{e:cross-cov_summability},
$$
(N+1)\cdot \cov \Big( \frac{1}{N-n+1} \sum^{N-n}_{k=0} \cos\Big( \frac{\widetilde{Z}_{n}(k)}{\sigma(n)}\Big),  -\frac{2 \hspace{0.5mm}\bbE \cos Y(0)}{N+1} \sum^{N}_{k'=0}\cos Y(k')   \Big)
$$
$$
\sim -\frac{2 \hspace{0.5mm}\bbE \cos Y(0)}{N+1}\sum^{N}_{k=0} \sum^{N}_{k'=0}\cov \Big(   \cos\Big( \frac{\widetilde{Z}_{n}(k)}{\sigma(n)}\Big),\cos Y(k')\Big)
$$
$$
\rightarrow - 2 \hspace{0.5mm}\bbE \cos Y(0) \sum^{\infty}_{\ell=-\infty}\sum_{m = 2}^{\infty} g_{1,n,m} \hspace{0.5mm}g_{\cos,m}\hspace{0.5mm} m! (\gamma_{\widetilde{Z}_n,Y}(\ell))^m, \quad N \rightarrow \infty
$$
(e.g., \cite{abry:didier:2018}, Lemma B.3). Therefore, by the analyticity of all characteristic functions involved, expression \eqref{e:corrB1B3} holds. This establishes ($i$).

Now we show ($ii$). When $3/2<\alpha<2$, by Propositions \ref{prop:E_1(n)} and \ref{prop:E_2(n)}, and Slutsky's theorem,
$$
\frac{(N+1)^{2-\alpha}}{L(N+1)} \Big( \mathfrak{R} \widehat{E}_N(n) - \bbE \cos\big(Y(n) - Y(0)\big) + \abs{\bbE \cos Y(0)}^2 \Big)
$$
$$
= \frac{(N+1)^{2-\alpha}}{L(N+1)\sqrt{N-n+1}}  \sqrt{N-n+1}\Big( \mathfrak{R} \widehat{E}_{N,1}(n) - \bbE \cos\big(Y(n) - Y(0)\big) \Big)
$$
$$
- \frac{(N+1)^{2-\alpha}}{L(N+1)} [\widehat{E}_{N,2} - \abs{\bbE \cos Y(0)}^2]
$$
$$
\overset{d}{\rightarrow} g_{1,2}\beta_{2,\alpha - 1} \widehat{I}_{2}(f_{\alpha - 1,2,1})
    + g_{2,1}^2 \beta_{1,\alpha/2}^2 \widehat{I}_{1}^2(f_{\alpha/2,1,1}).
$$
Thus, ($i.2$) holds.

Statement ($ii$) is a consequence of the fact that $\mathfrak{I} \widehat{E}_N(n) = \mathfrak{I} \widehat{E}_{N,1}(n)$, as well as of Proposition \ref{prop:E_1(n)} and Slutsky's theorem.

To establish expression \eqref{e:Cov(B2,Bell(1))=0}, first note that $\cos(x)$ is an even function while $\sin(x)$ is an odd function. Thus, $g_{1,n,2m+1}= g_{2,n,2m} = 0$, $m \in \bbN \cup \{0\}$. Then, by Proposition \ref{prop:g1g2cov}, $B_1(1)$ and $B_2(1)$ in Proposition \ref{prop:E_1(n)} are two independent standard normal variables, since they are Gaussian and uncorrelated. By a similar reasoning, $B_2(1)$ and $B_3(1)$ in Propositions \ref{prop:E_1(n)} and \ref{prop:E_2(n)}, respectively, are two independent standard normal variables, since they are Gaussian and uncorrelated. Thus, \eqref{e:Cov(B2,Bell(1))=0} holds, as claimed. $\Box$\\

\section{Pseudocode}\label{s:pseudocode}

{\small
\begin{center}
\begin{tabular}{|l|}
\hline
\multicolumn{1}{|c|}{\textbf{$\widehat{E}_N(n)$-based hypothesis testing} (see Section \ref{s:discussion})}\\
\\
\hline
\\
\textbf{Input}: \\
$\bullet$ one observed particle path ${\mathbf Y} = \{Y_1,Y_2,\hdots,Y_N\}_{N \in \bbN}$ of length $N$;\\
$\bullet$ a preset significance level $\eta \in [0,1]$;\\
\\
\textbf{Step 1}: use ${\mathbf Y}$ to compute an estimator $\widehat{\alpha}$ of $\alpha$ by means of the $\MSD$;\\
\\
\textbf{Step 2}: estimate $\mu_{{\mathfrak R}}$ by means of relation \eqref{e:Re_part_centering_estimator}; \\
\\
\textbf{Step 3}: estimate $\theta^2_{{\mathfrak R}}$ and $\theta^2_{{\mathfrak I}}$ by means of
relations \eqref{e:theta^2_Im_estimator} and \eqref{e:theta^2_Re_estimator};\\
\\
\textbf{Step 4}: compute ${\mathcal W}^*_N := \Big\{\frac{\sqrt{N+1} \big(\hspace{0.5mm}\mathfrak{R} \widehat{E}_N(n)- \widehat{\mu}_{{\mathfrak R}}\big)}{\widehat{\theta}_{{\mathfrak R}}} \Big\}^2+ \Big\{\frac{\sqrt{N+1}\hspace{0.5mm} \mathfrak{I} \widehat{E}_N(n)}{\widehat{\theta}_{{\mathfrak I}}}\Big\}^2$;\\
\\
\textbf{Step 5}: if ${\mathcal W}^*_N > \chi^2_2(\eta)$, reject the null hypothesis of mixing; otherwise, retain it.\\
\\
\hline
\end{tabular}
\end{center}
}

\section{Data availability statement}

Data sharing is not applicable to this article as no new data were created or analyzed in this study.

\bibliographystyle{plain}
\bibliography{mixing_asymptotics}

\small

\bigskip

\noindent \begin{tabular}{lr}
Kui Zhang and Gustavo Didier \\
Mathematics Department \\
Tulane University  \\
6823 St.\ Charles Avenue \\
New Orleans, LA 70118, USA \\
\texttt{kzhang3@tulane.edu} \\
\texttt{gdidier@tulane.edu}  \\
\end{tabular}\\

\smallskip

\end{document}